\newcommand{\C}{\mathbf{C}}
\newcommand{\R}{\mathbf{R}}
\newcommand{\Z}{\mathbf{Z}}
\newcommand{\D}{\mathbf{D}}
\newcommand{\Hyp}{\mathbf{H}}
\newcommand{\Sph}{\mathbf{S}}
\def\a{\alpha}
\def\g{\gamma}
\def\G{\Gamma}
\def\d{\delta}
\def\e{\varepsilon}
\def\l{\lambda}
\def\f{\varphi}
\newcommand{\sys}{\mathsf{sys}}
\newcommand{\area}{\mathsf{aire}}
\newcommand{\ri}{\mathsf{R}}
\newcommand{\Isom}{\mathsf{Isom}}
\newcommand{\PSL}{\mathsf{PSL}}
\newcommand{\Imaginary}{\mathsf{Im}}
\newcommand{\Real}{\mathsf{Re}}
\newcommand{\diff}{\mathrm{d}}
\newcommand{\teich}{\mathsf{Teich}}
\newcommand{\ML}{\mathcal{ML}}
\newcommand{\PML}{\mathcal{PML}}
\newcommand{\simple}{\mathcal{S}}
\newcommand{\mrm}{\mathrm}
\theoremstyle{plain}
\newtheorem{theorem}{Th\'eor\`eme}[section] 
\newtheorem*{theoremnonumber}{Th\'eor\`eme}
\newtheorem{corollary}[theorem]{Corollaire}
\newtheorem{proposition}[theorem]{Proposition}
\newtheorem{lemma}[theorem]{Lemme}
\newtheorem*{lemmedeschwarz}{Lemme de Schwarz}
\newtheorem*{lemmedecontraction}{Lemme de contraction}
\newtheorem{thm-defi}[theorem]{Théorème - Définition}
\newtheorem{pro-defi}[theorem]{Proposition - Définition}
\theoremstyle{definition}
\newtheorem{question}{Question}[section]
\newtheorem{remark}{Remarque}[section]
\newtheorem{remarks}{Remarques}[section]
\newtheorem*{notations}{Notations}
\title[]{Trois applications du lemme de Schwarz aux surfaces hyperboliques}
\subjclass[2000]{30F45 \and 30F60}
\keywords{Injectivity radius\and geodesics\and Schwarz lemma}
\date{Le \today}
\thanks{This work has been fully supported by FIRB 2010 (RBFR10GHHH$_{003}$).}
\begin{document}

\maketitle

\begin{flushleft} 
            \textbf{Matthieu Gendulphe}\\
  \begin{small}Dipartimento di Matematica Guido Castelnuovo\\
                      Sapienza università di Roma,
                      Piazzale Aldo Moro, 00185 Roma\\
                      Matthieu@Gendulphe.com\end{small}
\end{flushleft}

\renewcommand{\abstractname}{Abstract} 
\begin{abstract}

Through the Schwarz lemma, we provide a new point of view on three well-known results of the geometry of hyperbolic surfaces. The first result deal with the length of closed geodesics on hyperbolic surfaces with boundary (\cite{thurston,parlier,papadopoulos}). The two others give sharp lower bounds on two metric invariants: the length of the shortest non simple closed geodesic, and the radius of the biggest embedded hyperbolic disk (\cite{yamada}). We also discuss a question of Papadopoulos and Théret (\cite{papadopoulos}) about the length of arcs on surfaces with boundary.\par
 In a sequel \cite{rayon}, we use a generalization of the Schwarz lemma (\cite{yau}) to study the injectivity radius of surfaces with bounded curvature.
\end{abstract}

\section{Introduction} 

Dans cet article et sa suite \cite{rayon}, nous nous intéressons aux applications du lemme de Schwarz à la géométrie des surfaces.\par

\subsection{Contenu de l'article} 
Le lemme de Schwarz, interprété géométriquement par Pick, semble peu utilisé en géométrie hyperbolique. Pourtant, il permet de comparer à peu de frais des surfaces hyperboliques très différentes.
\ Avec le lemme de Schwarz, nous allons revisiter trois théorèmes classiques de la géométrie des surfaces hyperboliques. Nous verrons comment ce point de vue nouveau produit des preuves simples, courtes et ne nécessitant que peu de calculs.\par

 Le premier théorème (Thurston \cite{thurston}, Parlier \cite{parlier}) affirme que l'on peut décroître les longueurs de toutes les géodésiques fermées d'une surface hyperbolique à bord, quitte à diminuer la longueur du bord. Nous améliorons ce théorème, puis nous discutons le problème analogue pour les arcs. Nous répondons ainsi à une question de Papadopoulos et Théret (\cite{papadopoulos}). Ceci occupe la partie~\ref{sec:contraction} de l'article.\par
  Les deux autres théorèmes (Yamada \cite{yamada}) donnent des bornes inférieures optimales sur deux invariants métriques des surfaces hyperboliques~: la longueur de la plus courte géodésique fermée non simple, et la borne supérieure du rayon d'injectivité. Ces résultats sont respectivement redémontrés dans les parties~\ref{sec:intersection} et \ref{sec:yamada}. Pour minorer la borne supérieure du rayon d'injectivité, nous nous ramenons aux surfaces hyperboliques à pointes, afin de travailler avec une région cuspidale. Nous faisons alors appel à un résultat de Seppälä et Sorvali (\cite{sorvali} et appendice~\ref{sec:cercles-isom}) sur l'aire des régions cuspidales.\par
  
\subsection{Suite} Dans \cite{rayon} nous établissons des inégalités optimales sur la borne supérieure du rayon d'injectivité des surfaces à courbure bornée $|K|\leq 1$. Le lemme de Schwarz, sous une forme due à Yau (\cite{yau}), tient à nouveau un rôle clé. \par

\subsection{Conventions} Sauf mention du contraire, une surface est supposée connexe et sans bord. Une métrique \emph{hyperbolique} est une métrique \emph{complète} à courbure constante $-1$. Le bord d'une surface hyperbolique est toujours supposé \emph{géodésique}.\par
 Soit $X$ une surface hyperbolique à bord non vide. Un \emph{arc} est une immersion d'un intervalle compact dans $X$ telle que les extrémités, et seulement les extrémités, sont envoyées sur le bord $\partial X$. Les homotopies, ou les isotopies, préservent chaque composante de bord mais ne la fixe pas nécessairement point-à-point. Chaque classe d'homotopie d'arcs admet un représentant de longueur minimale, il s'agit de l'unique représentant géodésique orthogonal au bord. Nous parlerons parfois d'un arc pour désigner sa classe d'homotopie, dans ce cas la longueur de l'arc est celle de son plus court représentant.\par
 Une application \emph{conforme} est une application préservant les angles non orientés. Nous la supposons à dérivée partout non nulle.
 
\subsection{Remerciements} Je remercie Christophe Bavard pour des échanges qui ont suscité ce travail.

\section{Contraction des géodésiques, non contraction des arcs}\label{sec:contraction}

\subsection{Contraction des géodésiques des surfaces à bord}\label{sec:geodesics}
Commençons par rappeler la version de Pick du lemme de Schwarz~:

\begin{lemmedeschwarz}
Une  fonction holomorphe $f:\D\rightarrow\D$ est ou bien une isométrie hyperbolique, ou bien une application contractante au sens où
$\|\diff f_z\|<1$ en tout point $z$ de $\D$, la norme étant celle induite par la métrique hyperbolique. 
\end{lemmedeschwarz}

\begin{remark}
L'énoncé reste vrai pour une fonction anti-holomorphe.
\end{remark}

 
 La bijectivité caractérise les automorphismes parmi les applications holomorphes du disque dans lui-même, le lemme de Schwarz-Pick se reformule donc de la manière suivante~: \emph{une application holomorphe $f:\D\rightarrow \D$ est ou bien bijective et il s'agit d'une isométrie, ou bien contractante}. Le cas des applications conformes sera particulièrement utile par la suite.\par

\begin{corollary}
Une application conforme entre deux surfaces hyperboliques sans bord est ou bien surjective, ou bien contractante. Si elle est surjective, alors il s'agit d'un revêtement riemannien non singulier. 
\end{corollary}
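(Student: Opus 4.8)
Le plan est de relever $f$ aux rev\^etements universels, puis d'appliquer le lemme de Schwarz-Pick. Notons $\pi_X:\D\to X$ et $\pi_Y:\D\to Y$ les rev\^etements universels de nos deux surfaces hyperboliques sans bord, chacun s'identifiant au disque $\D$ muni de sa m\'etrique hyperbolique. Comme $\D$ est simplement connexe, l'application $f\circ\pi_X:\D\to Y$ se rel\`eve en une application $\tilde f:\D\to\D$ v\'erifiant $\pi_Y\circ\tilde f=f\circ\pi_X$. Puisque $f$ est conforme et que les projections $\pi_X,\pi_Y$ sont des isom\'etries locales, l'application $\tilde f$ est conforme; \'etant d\'efinie sur le connexe $\D$, elle est holomorphe ou anti-holomorphe, de sorte que le lemme de Schwarz s'applique (on utilise la remarque qui le suit pour le cas anti-holomorphe).

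Deux possibilit\'es se pr\'esentent alors. Si $\tilde f$ est contractante, c'est-\`a-dire $\|\diff\tilde f_z\|<1$ en tout point $z$, alors comme $\pi_X$ et $\pi_Y$ sont des isom\'etries locales on obtient $\|\diff f_x\|<1$ en tout point $x$ de $X$, et $f$ est contractante. Si au contraire $\tilde f$ est une isom\'etrie de $\D$, elle est en particulier surjective; la relation $\pi_Y\circ\tilde f=f\circ\pi_X$ jointe \`a la surjectivit\'e de $\pi_Y$ et de $\tilde f$ entra\^{\i}ne que $f$ est surjective.

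Il reste \`a voir que dans ce second cas $f$ est un rev\^etement riemannien non singulier. Pour tout $\gamma$ dans le groupe de rev\^etement $\Gamma_X$ de $\pi_X$, on a $\pi_X\circ\gamma=\pi_X$, d'o\`u $\pi_Y\circ(\tilde f\gamma\tilde f^{-1})=\pi_Y$: l'isom\'etrie $\tilde f\gamma\tilde f^{-1}$ appartient donc au groupe $\Gamma_Y$ de $\pi_Y$. Ainsi $\tilde f\Gamma_X\tilde f^{-1}\subseteq\Gamma_Y$, et $\tilde f$ descend par passage au quotient en le rev\^etement $X=\D/\Gamma_X\to\D/\Gamma_Y=Y$ associ\'e au sous-groupe $\tilde f\Gamma_X\tilde f^{-1}$; comme $\tilde f$ est une isom\'etrie, ce rev\^etement est riemannien et non singulier. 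On pourrait aussi invoquer le th\'eor\`eme classique selon lequel une isom\'etrie locale d'une vari\'et\'e compl\`ete dans une vari\'et\'e connexe est un rev\^etement.

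Le point d\'elicat est surtout la mise en place du rel\`evement conforme $\tilde f$ et la v\'erification que la dichotomie isom\'etrie/contraction fournie par le lemme de Schwarz redescend correctement sur $X$; une fois ceci acquis, le reste n'est que du formalisme sur les groupes de rev\^etement.
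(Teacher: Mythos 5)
Votre traitement de la premi\`ere assertion est correct et suit pour l'essentiel la m\^eme strat\'egie que le texte~: rel\`evement de $f$ en $\tilde f:\D\rightarrow\D$, application du lemme de Schwarz--Pick \`a $\tilde f$ (le cas anti-holomorphe \'etant couvert par la remarque), puis descente de l'alternative \emph{isom\'etrie de $\D$ ou contraction stricte} \`a $f$ au moyen des isom\'etries locales $\pi_X$ et $\pi_Y$. De m\^eme, votre passage du cas o\`u $\tilde f$ est une isom\'etrie \`a un rev\^etement riemannien non singulier (via l'inclusion $\tilde f\Gamma_X\tilde f^{-1}\subseteq\Gamma_Y$, ou le th\'eor\`eme classique sur les isom\'etries locales de source compl\`ete) est correct.

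La lacune concerne la seconde phrase du corollaire. Ce que vous d\'emontrez est~: \emph{si $\tilde f$ est une isom\'etrie} (autrement dit si $f$ est une isom\'etrie locale), alors $f$ est surjective et c'est un rev\^etement. L'\'enonc\'e affirme autre chose~: \emph{si $f$ est surjective}, alors c'est un rev\^etement. Votre dichotomie donne seulement \og rev\^etement ou contractante \fg{} et ne dit rien du cas o\`u $f$ serait \`a la fois contractante et surjective~; rien dans votre argument n'exclut cette possibilit\'e, de sorte que la seconde assertion n'est pas \'etablie. C'est pr\'ecis\'ement le point que la preuve du texte traite~: elle montre que $f$ est une isom\'etrie locale si et seulement si $f(X)=Y$, en utilisant la caract\'erisation de la m\'etrique de $X$ comme l'unique m\'etrique compl\`ete \`a courbure constante $-1$ dans sa classe conforme --- l'image r\'eciproque par $f$ de la m\'etrique de $f(X)$ est une m\'etrique conforme de courbure $-1$ sur $X$, et c'est la surjectivit\'e qui permet d'en garantir la compl\'etude, d'o\`u l'\'egalit\'e avec la m\'etrique de $X$ (\emph{la compl\'etude est la propri\'et\'e cl\'e}, comme le souligne le texte). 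Il vous manque donc un argument de ce type reliant la surjectivit\'e de $f$ au cas d'\'egalit\'e du lemme de Schwarz~; sans lui, votre preuve n'\'etablit que l'implication plus faible \emph{isom\'etrie locale} $\Rightarrow$ \emph{rev\^etement}.
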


\begin{remark} Une application conforme entre deux surfaces hyperboliques \emph{fermées} est surjective, car elle est ouverte et fermée.\end{remark}

\begin{proof} 
Soit $f:X\rightarrow Y$ une application conforme entre deux surfaces hyperboliques sans bord.
 Selon le lemme de Schwarz, cette application est ou bien une isométrie locale, ou bien contractante. Clairement, $f$ est une isométrie locale si et seulement si l'image réciproque de la métrique de $f(X)$ (induite par celle de $Y$) coïncide avec la métrique de $X$. Or la métrique de $X$ se caractérise comme l'unique métrique complète à courbure constante $-1$ dans sa classe conforme. Nous observons que $f$ est une isométrie locale si et seulement si $f(X)=Y$. La complétude est la propriété clé.\par
 Si $f$ est une isométrie locale surjective, alors il en va de même pour n'importe lequel de ses relevés $\tilde f:\D\rightarrow \D$. Comme ce relevé envoie une géodésique de $\D$ sur une géodésique de $\D$, il est nécessairement injectif. Donc $\tilde f$ est une isométrie de $\D$, et $f$ est un revêtement non singulier.
 \end{proof}  

 L'application conforme non surjective la plus simple est l'inclusion. Nous allons construire deux types d'inclusions, nous nous intéresserons surtout au deuxième.\par
 Soit $X$ une surface hyperbolique, et $\Sigma\subset S$ un ensemble discret de points. Nous désignons par $S$ la surface lisse sous-jacente à $X$. D'après le théorème de Poincaré-Koebe, il existe une unique structure hyperbolique $Y$ sur $S\setminus\Sigma$ telle que l'inclusion $Y\hookrightarrow X$ soit conforme, donc contractante. Notez qu'à chaque point de $\Sigma$ correspond une pointe de $Y$. Cette méthode, permettant de comparer une surface compacte avec une surface non compacte, a été introduite par R.~Brooks (\cite{brooks}), et reprise par F.~Balacheff, E.~Makover et H.~Parlier (\cite{balacheff}).\par 
  Nous allons maintenant construire des inclusions entre des surfaces hyperboliques d'aires infinies suivant une idée qui remonte au moins à L.~Bers (\cite{bers}). Cette construction repose sur la correspondance entre les structures conformes et les métriques hyperboliques sur une surface donnée, donc sur le théorème d'uniformisation de Poincaré-Koebe.\par
 
 On appelle \emph{vasque} (\emph{funnel} en anglais) une partie d'une surface hyperbolique isométrique au quotient $\{z\in\Hyp~;~\Real(z)>0\}/\langle z\mapsto \a z \rangle$ pour un certain nombre réel $\a>1$. Il s'agit d'un bout d'aire infinie bordé par une géodésique fermée simple de longueur $\ln(\a)$. Une vasque est homéomorphe au cylindre ouvert $\Sph^1\times ]0,+\infty[$, et d'un point de vue conforme est équivalente à une couronne de module $\pi^2/\ln(\a)$. En particulier, une vasque admet une compactification conforme canonique, qui s'identifie au quotient $\{z\in\Hyp~;~\Real(z)\geq 0\}\cup\R_+^\ast/\langle z\mapsto \a z \rangle$.\par
 Soit $\widehat X$ une surface hyperbolique sans bord ayant au moins une vasque. En ajoutant une petite couronne à l'une des vasques, nous obtenons une surface $\widehat Y$ munie d'une structure conforme pour laquelle l'inclusion $f:\widehat X\hookrightarrow \widehat Y$ est conforme. En vertu du corollaire ci-dessus, l'inclusion $f$ est contractante relativement aux métriques de Poincaré sur $\widehat X$ et $\widehat Y$. Considérons le facteur conforme $\phi:\widehat X\rightarrow \R_+^\ast$ défini par  $\phi(x)=\|\diff f_x\|$ en tout point $x$ de $\widehat X$. Comme $f$ est contractante, nous avons $0< \phi <1$ sur $\widehat X$.  En particulier, pour tout sous-ensemble compact $K\subset \widehat X$, il existe $0<\l<1$ tel que $0<\phi<\l<1$ sur $K$.\par
 Nous appelons \emph{c\oe ur convexe} de $\widehat X$ la plus petite sous-surface convexe contenant toutes les géodésiques fermées de $\widehat X$. Il s'agit simplement du complémentaire des vasques dans $\widehat X$. Lorsque le c\oe ur convexe est d'aire finie, nous pouvons prendre pour $K$ un compact  contenant toutes les géodésiques fermés simples de $\widehat X$ (voir le corollaire~\ref{cor:simple}), nous avons alors $\ell_\g(\widehat Y) \leq \l \ell_\g(\widehat X)$ pour toute classe d'isotopie $\g$ de courbe fermée simple. Remarquez que $\widehat Y$ se rétracte par déformation sur $\widehat X$, il y a donc une identification canonique entre les classes d'isotopie de courbes fermées de $\widehat X$ et celles de $\widehat Y$. \par
 Soit $X$ une surface hyperbolique à bord géodésique non simplement connexe. Le revêtement universel $\tilde X$ se réalise comme un sous-ensemble convexe de $\Hyp$ bordé par une infinité de géodésiques disjointes. Sur $\tilde X$ agit un sous-groupe discret sans torsion $\G\leqslant \Isom(\Hyp)$ tel que $X$ soit isométrique à $\tilde X/\G$.
 Nous appelons \emph{extension de Nielsen} de $X$ la surface $\widehat X =\Hyp/\G$. Il s'agit, à isométrie près, de l'unique surface hyperbolique sans bord dont le c\oe ur convexe est isométrique à $X$. Le raisonnement ci-dessus appliqué à l'extension de Nielsen de $X$ donne~:
  
\begin{lemmedecontraction}
Soit $X$ une surface hyperbolique d'aire finie et à bord non vide. Soit $\widehat Y$ une surface hyperbolique obtenue en ajoutant une couronne à l'une des vasques de l'extension de Nielsen $\widehat X$ de $X$. Alors l'inclusion $\widehat X\hookrightarrow \widehat Y$ est contractante, et il existe un nombre réel $0<\l<1$ tel que  $$\ell_\g(\widehat Y)\leq \l \ell_\g(\widehat X)$$
pour toute classe d'isotopie de géodésique fermée simple $\g$. L'inégalité ci-dessus reste évidemment vraie si l'on remplace $\widehat X$ et $\widehat Y$ par leurs c\oe urs convexes $X$ et $Y$.
\end{lemmedecontraction}


\begin{remarks}\label{rem:contraction}\begin{enumerate}
\item Lorsque $X$ est compacte, nous avons $\ell_\g(Y)\leq\l \ell_\g(X)$ avec $0<\l<1$ pour toute géodésiques fermée $\g$.
\item Par densité des multi-géodésiques à poids rationnels, et par continuité de la fonctionnelle longueur, l'inégalité s'étend aux laminations géodésiques mesurées ne contenant aucun arc (voir \textsection~\ref{sec:rappels}).
\item Nous ne savons pas si l'inclusion $\widehat X\hookrightarrow \widehat Y$ envoie $X$ à l'intérieur de $Y$.
\end{enumerate}
\end{remarks}
 
 Ce résultat améliore et simplifie des résultats antérieurs. Ainsi, W.P.~Thurston a montré comment modifier une structure hyperbolique sur une surface à bord de manière à réduire les longueurs de toutes les géodésiques fermées. Son travail n'a pas été publié, mais on trouve un exposé de sa méthode dans l'article \cite{papadopoulos} de A.~Papadopoulos et G.~Théret. Le même résultat a été démontré indépendamment par H.~Parlier (\cite{parlier}). En fait, leurs constructions sont équivalentes (il s'agit de travailler dans un pantalon), mais l'approche de Thurston produit un meilleur contrôle sur les longueurs, du type $\ell_\g(Y)\leq \ell_\g(X)-\e$ pour un certain $\e>0$ indépendant de $\g$.\par

\subsection{Les identités géométriques vues comme obstruction}
Considérons une surface hyperbolique \emph{orientable} $X$ d'aire finie et à bord non vide. Les géodésiques fermées simples de $X$ satisfont l'identité de McShane-Mirzakhani (\cite{mirzakhani}) ~:
\begin{eqnarray*}
\sum_{\{\g,\d\}} 2 \ln\left(\frac{e^{b_1/2}+e^{\frac{\g+\d}{2}}}{e^{-b_1/2}+e^{\frac{\g+\d}{2}}} \right)+\sum_{i=1}^k \sum_\eta b_1-\ln\left(\frac{\cosh(\frac{b_i}{2})+\cosh(\frac{b_1+\eta}{2})}{\cosh(\frac{b_i}{2})+\cosh(\frac{b_1-\eta}{2})}\right) &=& \frac{b_1}{2},
\end{eqnarray*} 
où $b_1,\ldots, b_k$ sont les composantes du bord $\partial X$, $\{\g,\d\}$ parcourt les paires de géodésiques fermées simples bordant un pantalon avec $b_1$, et $\eta$ parcourt l'ensemble des géodésiques fermées simples bordant un pantalon avec $b_1$ et $b_i$. Afin de faciliter la lecture de la formule, nous utilisons le nom d'une géodésique pour désigner sa longueur.\par
 Les termes généraux des deux séries sont des fonctions strictement croissantes en les variables $b_1,\ldots,b_k$, et strictement décroissantes en les variables $\g,\d,\eta$. Nous en déduisons l'impossibilité d'augmenter les longueurs de toutes les géodésiques fermées simples à longueurs de bords fixées~: 

\begin{proposition}[Mirzakhani]
Soient $X$ et $Y$ deux métriques hyperboliques d'aire finie sur une surface à bord non vide. Supposons que $\ell_{b_i}(X)=\ell_{b_i}(Y)$ pour toute composante de bord $b_i$. Si $\ell_\g(X)\leq\ell_\g(Y)$ pour toute classe d'isotopie de géodésique fermée simple $\g$, alors $X$ et $Y$ sont isotopes.
\end{proposition}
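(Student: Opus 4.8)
The plan is to read the McShane--Mirzakhani identity as a rigidity constraint. I view $X$ and $Y$ as two points of the Teichm\"uller space $\teich$ of the underlying marked bordered surface; the marking gives a canonical identification of the isotopy classes of simple closed curves of $X$ with those of $Y$, and it preserves every topological condition occurring in the identity (``$\{\gamma,\delta\}$ bounds a pair of pants with $b_1$'', ``$\eta$ bounds a pair of pants with $b_1$ and $b_i$''). Thus the two summations are indexed by the \emph{same} sets for $X$ and for $Y$, and I may compare the identity term by term.

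First I would write the identity for $X$ and for $Y$, with the same base component $b_1$. By hypothesis $\ell_{b_i}(X)=\ell_{b_i}(Y)$ for every $i$, so the right-hand side $b_1/2$ and every boundary length appearing inside a general term take the same value on both sides; the only quantities that can differ are the interior lengths $\gamma,\delta,\eta$, and by hypothesis each of them is at least as large for $Y$ as for $X$. Now I invoke the monotonicity recalled just above: each general term of the two series is strictly decreasing in $\gamma,\delta,\eta$. Replacing the $X$-lengths by the weakly larger $Y$-lengths can therefore only decrease the left-hand side, and strictly so as soon as a single interior length strictly increases. Since both left-hand sides equal the common value $b_1/2$, no strict increase can occur: $\ell_\gamma(X)=\ell_\gamma(Y)$ for every simple closed geodesic $\gamma$ that appears in the identity centred at $b_1$, that is, for every $\gamma$ that is a cuff of an embedded pair of pants one of whose cuffs is $b_1$.

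The remaining step, and the one I expect to be the real point, is to check that this already constrains \emph{every} simple closed geodesic. Given a non-peripheral simple closed geodesic $\tau$, I would realise it disjoint from $b_1$ (to which it is automatically non-isotopic), choose an embedded arc $a$ from $b_1$ to $\tau$, and take a regular neighbourhood of $b_1\cup a\cup\tau$: this graph has Euler characteristic $-1$, and since the surface is orientable the band joining the two annular neighbourhoods is untwisted, so the neighbourhood is an embedded pair of pants with cuffs $b_1$, $\tau$ and a third curve $c$, the band sum of $b_1$ and $\tau$. Hence $\tau$ is a cuff of an embedded pair of pants containing $b_1$, so it occurs in the identity---either inside a pair $\{\tau,c\}$ when $c$ is interior, or as an $\eta$ when $c$ is boundary-parallel---and the previous step gives $\ell_\tau(X)=\ell_\tau(Y)$. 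The care needed here is in the degenerate low-complexity configurations (one-holed torus, four-holed sphere), where $c$ may be isotopic to $\tau$ or to a boundary component, so that the pair becomes coincident or collapses onto the $\eta$-summation; one checks directly that $\tau$ still contributes a strictly monotone term.

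Having obtained $\ell_\gamma(X)=\ell_\gamma(Y)$ for \emph{all} simple closed geodesics $\gamma$, the boundary lengths being equal by assumption, I would conclude with the classical rigidity statement that the marked length function on simple closed curves is injective on $\teich$: two finite-area hyperbolic metrics with the same boundary lengths and the same simple marked length spectrum represent the same point of $\teich$. Therefore $X$ and $Y$ are isotopic, which is the assertion.
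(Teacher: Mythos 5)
Your proof is correct and follows essentially the same route as the paper, whose argument is precisely your first two paragraphs: termwise comparison of the McShane--Mirzakhani identity for $X$ and $Y$ using the strict monotonicity of the general terms in $\gamma,\delta,\eta$ at fixed boundary lengths. The two further steps you make explicit --- that every non-peripheral simple closed geodesic is a cuff of an embedded pair of pants containing $b_1$ (with the low-complexity degenerations handled), and the classical injectivity of the marked simple length spectrum on the Teichm\"{u}ller space --- are exactly the details the paper leaves implicit, and you supply them correctly.
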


 Considérons une surface hyperbolique $X$ d'aire finie et à bord non vide, nous ne supposons plus $X$ orientable. Les arcs géodésiques orthogonaux au bord satisfont l'identité de Bridgeman (\cite{bridgeman})~:
\begin{eqnarray*}
\sum_\a  \mathcal{R}\left(\frac{1}{\cosh^2(\a/2)}\right) & = & \frac{\pi}{4} \area(X),
\end{eqnarray*}
où $\a$ parcourt l'ensemble des arcs géodésiques orthogonaux à $\partial X$, et $\mathcal{R}$ désigne le dilogarithme de Rogers. Le terme général de la série est une fonction strictement décroissante de la longueur de $\a$. Nous en déduisons  (formule de Gauss-Bonnet) l'impossibilité d'augmenter les longueurs des arcs orthogonaux à topologie fixée~:

\begin{proposition}[Bridgeman]\label{pro:bridgeman}
Soient $X$ et $Y$ deux métriques hyperboliques d'aire finie sur une surface à bord non vide. Si $\ell_\a(X)\leq\ell_\a(Y)$ pour toute classe d'isotopie d'arc $\a$, alors $X$ et $Y$ sont isotopes.
\end{proposition}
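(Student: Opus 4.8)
The plan is to run exactly the same ``identity as obstruction'' argument as in the Mirzakhani proposition, now with Bridgeman's identity, and then to upgrade the resulting equality of \emph{all} arc lengths into an isometry by a direct geometric reconstruction.

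First I would observe that the index set of the Bridgeman sum --- the isotopy classes of geodesic arcs orthogonal to $\partial X$ --- is purely topological, hence literally the same for $X$ and $Y$, which are two metrics on one surface $S$. By Gauss--Bonnet the area is a topological invariant, so $\area(X)=-2\pi\chi(S)=\area(Y)$ and the two right-hand sides of Bridgeman's identity coincide. Writing $F(t)=\mathcal{R}\!\left(1/\cosh^2(t/2)\right)$, which is positive and strictly decreasing, the hypothesis $\ell_\alpha(X)\le\ell_\alpha(Y)$ gives $F(\ell_\alpha(X))\ge F(\ell_\alpha(Y))$ for every class $\alpha$. The elementary fact that two convergent series of nonnegative terms with equal sum and termwise domination must agree term by term then yields $F(\ell_\alpha(X))=F(\ell_\alpha(Y))$ for all $\alpha$, and strict monotonicity of $F$ upgrades this to $\ell_\alpha(X)=\ell_\alpha(Y)$ for every arc class $\alpha$.

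It remains to prove the rigidity statement: two finite-area hyperbolic structures with geodesic boundary on the same surface with equal marked orthogeodesic spectrum are isotopic. Here I would fix a maximal system $\mathcal{A}$ of pairwise disjoint, pairwise non-isotopic simple arcs. Cutting $X$ along the orthogeodesic representatives of $\mathcal{A}$ decomposes it into right-angled hexagons whose sides alternate between sub-arcs of $\partial X$ and arcs of $\mathcal{A}$. A right-angled hexagon is determined up to isometry by its three pairwise non-adjacent sides, which here are exactly its three $\mathcal{A}$-sides; a reassuring check is the count $\#\mathcal{A}=-3\chi(S)=\dim\teich(S)$, so these lengths are the expected number of coordinates. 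Since $\mathcal{A}$ is the same topological system for $X$ and $Y$, and since the equality of all arc lengths applies in particular to the arcs of $\mathcal{A}$, each hexagon of $X$ is isometric to the corresponding hexagon of $Y$. The gluings along the $\mathcal{A}$-sides are rigid --- the right-angle corners on $\partial X$ must match, leaving no shear freedom --- so the hexagon isometries assemble into an isometry $X\to Y$ carrying the arc system of $X$ to that of $Y$, hence isotopic to the identity marking; thus $X$ and $Y$ are isotopic.

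The main obstacle is this last geometric step, and precisely the claim that a maximal disjoint arc system cuts a bordered hyperbolic surface (orientable or not) into right-angled hexagons whose three interior sides form the alternating triple determining the piece. This decomposition is classical, but I would want to check it carefully in every topological type --- in particular ruling out, or correctly handling, the situation where a single arc abuts the same hexagon on both sides --- so that ``knowing the three $\mathcal{A}$-sides'' genuinely pins down each hexagon. Granting this, the reconstruction, and hence the proposition, follows with no further computation, in exact parallel with the Mirzakhani proof.
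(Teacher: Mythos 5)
Your first step is exactly the paper's argument: the paper obtains the proposition directly from Bridgeman's identity by noting that the two right-hand sides agree by Gauss--Bonnet and that each term is strictly decreasing in the arc length, so termwise domination plus equal sums forces $\ell_\a(X)=\ell_\a(Y)$ for every isotopy class of arc. The paper stops there and treats the remaining rigidity (equal marked orthogeodesic spectra imply isotopic metrics) as implicit; your hexagon reconstruction is a correct way to supply it, and the scenario you worry about --- an arc abutting the same hexagon on both sides --- causes no trouble, since the right-angled hexagon is still determined by its three alternating side lengths (counted with multiplicity) and the gluings along arcs admit no shear, the right-angled corners pinning down the identification. The one point to adjust in your explicit step: the statement allows finite-area metrics with cusps, and then a maximal arc system also produces non-compact complementary pieces with ideal vertices, so $\#\mathcal{A}$ is smaller than $-3\chi(S)$ and not every piece is a compact right-angled hexagon; these cusped pieces are again determined by the lengths of their arc-sides, so the reconstruction still goes through, but as written your count and your ``all pieces are hexagons'' claim are only valid when $X$ is compact.
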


Cette proposition répond à une question de Papadopoulos et Théret (\cite{papadopoulos}).

\begin{question}
Peut-on remplacer \emph{arc} par \emph{arc simple} dans la proposition~?
\end{question}

\subsection{Espaces de Teichmüller et laminations géodésiques mesurées}\label{sec:rappels}
Dans ce paragraphe, nous introduisons des objets classiques de topologie et géométrie des surfaces. Ils interviendront de manière essentielle dans les preuves des propositions à venir. Pour plus de détails, nous recommandons les livres \cite{imayoshi,hubbard} sur les espaces de Teichmüller, et les livres \cite{bonahon,penner} sur les laminations géodésiques.

\subsubsection*{Espaces de Teichmüller} Soit $S$ une surface admettant une métrique hyperbolique d'aire finie. L'\emph{espace de Teichmüller} $\teich(S)$ est l'espace des classes d'isotopie de métriques hyperboliques sur $S$. C'est une variété lisse difféomorphe à une boule ouverte. Précisons que nous ne supposons pas fixées les longueurs des composantes de bord. À chaque classe d'isotopie de courbe fermée non périphérique $\g$ on associe sa fonction \emph{longueur de géodésique} $\ell_\g:\teich(S)\rightarrow \R_+^\ast$, qui en un point $[X]$ donne la longueur $\ell_\g(X)$ de l'unique géodésique dans la classe $\g$. Ces fonctions sont lisses.
 
\subsubsection*{Métrique de Weil-Petersson} L'espace de Teichmüller $\teich(S)$ admet une métrique riemannienne à courbure sectionnelle négative dite de \emph{Weil-Petersson}. Bien que non complète, cette métrique possède de bonnes propriétés~: elle est uniquement géodésique, et les hessiens des fonctions longueur de géodésique sont définis positifs (S.~Wolpert \cite{wolpert}).\par
 Le complété de Weil-Petersson de l'espace de Teichmüller s'appelle l'\emph{espace de Teichmüller augmenté}. Sans entrer dans les détails, disons qu'un point du bord consiste en une surface hyperbolique dont certaines géodésiques fermées simples disjointes sont de longueurs nulles (voir H.~Masur \cite{masur}).\par

\subsubsection*{Systole} La \emph{systole} d'une métrique hyperbolique sur $S$ est la longueur de sa plus courte géodésique fermée non périphérique. En tant que minimum des fonctions longueur de géodésique, la systole définit une fonction continue sur $\teich(S)$. Si $S$ est fermée, alors la systole est bornée supérieurement sur l'espace de Teichmüller.\par
 
\subsubsection*{Laminations géodésiques mesurées} 
Fixons une métrique hyperbolique $X$ sur $S$. Une \emph{lamination géodésique} $\eta$ de $X$ est un fermé de $S$ qui se décompose en une union disjointe de
\begin{itemize}
\item géodésiques fermées simples pouvant être des composantes de bord,
\item géodésiques simples infinies n'allant pas à l'infini dans une région cuspidale,
\item arcs géodésiques simples orthogonaux au bord $\partial S$ en leurs extrémités.
\end{itemize}
 Une \emph{mesure transverse} à $\eta$ est une mesure définie sur chaque arc transverse à $\eta$, et invariante par les isotopies préservant $\eta$. Une \emph{lamination géodésique mesurée} est une lamination géodésique munie d'une mesure transverse dont le support est la lamination toute entière. Les laminations géodésiques mesurées les plus simples sont celles dont toutes les feuilles sont compactes (nécessairement en nombre fini).\par
 Soit $\eta$ une lamination géodésique formée d'un nombre fini de feuilles compactes. Toute mesure transverse à $\eta$ s'écrit comme une combinaison linéaire à c\oe fficients positifs $\sum_{i}  a_i i(\eta_i,\cdot)$, où $\eta_i$ parcourt l'ensemble des feuilles de $\eta$, et $i(\cdot,\cdot)$ désigne la fonction \emph{nombre d'intersection}. Nous notons $\sum_i a_i \eta_i$ la lamination géodésique mesurée associée, et nous définissons sa longueur par $\ell_{\sum_i a_i \eta_i}(X)=\sum_i a_i \ell(\eta_i)$.\par
 L'ensemble $\ML(X)$ des laminations géodésiques mesurées de $X$ possède une structure linéaire par morceaux \emph{entière}, pour laquelle les points entiers sont les combinaisons linéaires à c\oe fficients \emph{entiers} $\sum_i a_i \eta_i$. La multiplication des mesures transverses par les nombres réels positifs fait de $\ML(X)$ un cône. Les points rationnels de $\ML(X)$ sont les multiples rationnels des points entiers.\par
 Soit $\simple(X)$ l'ensemble des géodésiques fermées simples ne bordant ni un disque ni un ruban de M\oe bius (parmi lesquelles nous comptons les composantes de bord). L'espace $\ML(X)$ se plonge topologiquement dans l'espace affine $\R^{\simple(X)}$ muni de la topologie produit. Le projectifié $\PML(X)$ est homéomorphe à une sphère de dimension finie. La fonctionnelle longueur $\eta\mapsto \ell_\eta(X)$ définie plus haut se prolonge en une fonction continue sur $\ML(X)$.\par
 \'Etant donnée une deuxième métrique hyperbolique $Y$, les espaces $\ML(X)$ et $\ML(Y)$ sont canoniquement isomorphes relativement à leurs structures linéaires par morceaux entières. Aussi, nous parlerons de laminations géodésiques mesurées sans avoir fixée au préalable une métrique hyperbolique sur $S$, et nous utiliserons les notations transparentes $\ML(S)$ et $\PML(S)$.

\subsection{Contraction de tous les arcs sauf d'un nombre fini}
Commençons par rappeler un fait bien connu~:
\begin{proposition}
Soient $X$ et $Y$ deux métriques hyperboliques d'aire finie sur une surface à bord non vide $S$. S'il existe une géodésique orientable $\g$ telle que $\ell_\g(Y)<\ell_\g(X)$, alors l'inégalité $\ell_\a(Y)<\ell_\a(X)$ est satisfaite par une infinité d'arcs simples $\a$.
\end{proposition}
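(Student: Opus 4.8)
The plan is to manufacture the infinitely many arcs by twisting a single fixed arc around $\gamma$, and to detect the strict inequality through the linear growth of length under iterated twisting. The orientability hypothesis enters exactly once: since $\gamma$ is two-sided it admits a Dehn twist $T_\gamma$, which would not be available for a one-sided geodesic. (Here $\gamma$ is understood to be a non-peripheral \emph{simple} closed geodesic; on a pair of pants no such $\gamma$ exists, so the statement is vacuously true there, consistently with the fact that a pair of pants carries only finitely many simple arcs.)

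First I would fix a simple arc $\alpha$ crossing $\gamma$ essentially, i.e. with $i(\alpha,\gamma)>0$. To produce one, choose a finite family of disjoint simple arcs with endpoints on $\partial S$ that cut $S$ into disks (such an ideal triangulation exists since $S$ carries a finite-area hyperbolic metric with boundary, hence has negative Euler characteristic). As $\gamma$ is a closed geodesic it is essential and non-peripheral, so it cannot be homotoped into a single face; it must therefore cross one arc of the family essentially, and that arc is the desired $\alpha$.

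The core of the proof is the asymptotic behaviour of $\ell_{T_\gamma^{n}(\alpha)}$. I would invoke the classical fact (Thurston) that, in $\ML(S)$, the normalised sequence $\tfrac1n\,T_\gamma^{n}(\alpha)$ converges to $i(\alpha,\gamma)\,\gamma$: each of the $i(\alpha,\gamma)$ strands of $\alpha$ meeting $\gamma$ gets wrapped $n$ times around $\gamma$, so projectively the twisted arc tends to $\gamma$. Combining this with the homogeneity and the continuity of the length functional on $\ML(S)$ recalled in \S\ref{sec:rappels}, I would obtain, for $Z\in\{X,Y\}$,
\[ \frac{1}{n}\,\ell_{T_\gamma^{n}(\alpha)}(Z)\ \xrightarrow[n\to\infty]{}\ i(\alpha,\gamma)\,\ell_\gamma(Z), \]
and therefore
\[ \lim_{n\to\infty}\frac{\ell_{T_\gamma^{n}(\alpha)}(Y)}{\ell_{T_\gamma^{n}(\alpha)}(X)}=\frac{\ell_\gamma(Y)}{\ell_\gamma(X)}<1. \]
Consequently $\ell_{T_\gamma^{n}(\alpha)}(Y)<\ell_{T_\gamma^{n}(\alpha)}(X)$ for every sufficiently large $n$. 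The arcs $T_\gamma^{n}(\alpha)$ are simple, being images of the simple arc $\alpha$ under the homeomorphism $T_\gamma^{n}$, and they are pairwise non-isotopic because their $X$-lengths tend to infinity; hence the strict inequality is realised by infinitely many simple arcs, as required.

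The main obstacle I anticipate is the rigorous justification of the linear growth, precisely the convergence $\tfrac1n\,T_\gamma^{n}(\alpha)\to i(\alpha,\gamma)\,\gamma$ in $\ML(S)$ for a \emph{twisted arc}, together with the applicability of the continuity statement to this limit, whose support is a single closed curve while the approximants are arcs. If one prefers to avoid the lamination machinery, the same asymptotic can be obtained by a direct hyperbolic estimate inside a standard collar of $\gamma$, yielding $\ell_{T_\gamma^{n}(\alpha)}(Z)=n\,i(\alpha,\gamma)\,\ell_\gamma(Z)+O(1)$ with an $O(1)$ depending only on $\alpha,\gamma,Z$; this self-contained route trades the appeal to Thurston's theory for an explicit trigonometric computation, and it suffices equally well since $\alpha$ is fixed while $n$ varies.
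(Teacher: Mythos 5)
Your proposal is correct and follows essentially the same route as the paper: the paper also fixes a simple arc $\a$ meeting $\g$, applies the Dehn twist $\tau_\g$, and uses the convergence of $(\tau_\g^n(\a))_n$ to $\g$ in $\PML(S)$ (Farb--Margalit, proposition~3.4) together with the continuity of the length functional, phrased there as the openness of the cone $\{\eta\in\ML(S)~;~\ell_\eta(Y)<\ell_\eta(X)\}$ rather than via your explicit limit of the ratio of lengths. The extra details you supply (construction of $\a$, simplicity and pairwise distinctness of the twisted arcs) are consistent with, and implicit in, the paper's argument.
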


\begin{proof}
Soit $U\subset \ML(S)$ le cône ouvert formé des laminations géodésiques mesurées $\eta$ telles que $\ell_\eta(Y)<\ell_\eta(X)$. Cet ouvert se projette sur un ouvert $\bar U$ de $\PML(S)$ contenant $\g$.\par
 Soit $\tau_\g$  le twist de Dehn selon la géodésique $\g$. \'Etant donné un arc simple $\a$ intersectant $\g$, la suite $(\tau_\g^n(\a))_n$ converge vers $\g$ dans $\PML(S)$ (proposition~3.4 de \cite{farb}). En conséquence, $\bar U$  contient une infinité d'arcs $\tau_\g^n(\a)$.
\end{proof}

Les propositions suivantes montrent que l'on peut décroître les longueurs de tous les arcs sauf d'un nombre fini. Selon la proposition~\ref{pro:bridgeman} on ne peut pas faire mieux.

\begin{proposition}
Soient $X$ et $Y$ deux métriques hyperboliques d'aire finie sur une surface à bord non vide $S$. S'il existe un nombre réel $0<\l<1$ tel que $\ell_\g(Y)\leq\l \ell_\g(X)$ pour toute classe d'isotopie de géodésique fermée simple $\g$. Alors il y a seulement un nombre fini de classes d'isotopie d'arcs \emph{simples} $\a$ telles que $\ell_\a(Y)\geq\ell_\a(X)$.
\end{proposition}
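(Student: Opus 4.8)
Mon plan est de lire l'hypoth\`ese comme une in\'egalit\'e entre fonctionnelles longueur sur $\PML(S)$, puis d'isoler les arcs comme des points ne pouvant s'accumuler que sur la partie sans arc de $\PML(S)$. Je consid\`ere d'abord la fonction $F:\PML(S)\rightarrow\R_+^\ast$ donn\'ee par $F([\eta])=\ell_\eta(Y)/\ell_\eta(X)$ : elle est bien d\'efinie car les fonctionnelles longueur sont homog\`enes de degr\'e $1$ et strictement positives sur $\ML(S)\setminus\{0\}$, et continue car ces fonctionnelles le sont. Les courbes ferm\'ees simples \`a poids \'etant denses dans le ferm\'e $\PML_0(S)$ form\'e des laminations ne contenant aucun arc, l'hypoth\`ese et la continuit\'e entra\^inent $F\leq\lambda<1$ sur $\PML_0(S)$ (voir l'argument de la remarque~\ref{rem:contraction}). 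Par cons\'equent, le ferm\'e $K:=F^{-1}([1,+\infty[)$ de la sph\`ere compacte $\PML(S)$ est compact et disjoint de $\PML_0(S)$.

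Je raisonne ensuite par l'absurde : supposons qu'une infinit\'e d'arcs simples $\alpha$ v\'erifient $\ell_\alpha(Y)\geq\ell_\alpha(X)$, c'est-\`a-dire $[\alpha]\in K$. Ces points \'etant deux \`a deux distincts dans le compact $K$, quitte \`a extraire une sous-suite on peut supposer $[\alpha_n]\to[\mu]\in K$. Tout revient alors \`a montrer que $\mu$ ne contient aucun arc, ce qui contredira $K\cap\PML_0(S)=\emptyset$.

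Pour cela j'utilise deux ingr\'edients. D'une part, une surface d'aire finie ne poss\`ede qu'un nombre fini d'orthog\'eod\'esiques simples de longueur born\'ee (discr\'etude de l'orthospectre), donc $L_n:=\ell_{\alpha_n}(X)\to+\infty$ ; apr\`es normalisation, $\tilde\alpha_n:=\alpha_n/L_n$ v\'erifie $\ell_{\tilde\alpha_n}(X)=1$ et converge dans $\ML(S)$ vers le repr\'esentant $\mu_0$ de $[\mu]$ de $X$-longueur $1$. D'autre part, je passe au double $DS$ de $S$ le long de $\partial S$ : le doublement $D:\ML(S)\to\ML(DS)$ est continu, un arc essentiel se double en une courbe ferm\'ee coupant deux fois la multicourbe fixe $b=\partial S\subset DS$, tandis qu'une composante sans arc se double sans couper $b$. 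On a donc $i(D\tilde\alpha_n,b)=2/L_n$, et la continuit\'e du nombre d'intersection donne $i(D\mu_0,b)=\lim_n 2/L_n=0$. Une intersection nulle avec $b$ interdisant toute composante de type arc, la lamination $\mu_0$, donc $\mu$, appartient \`a $\PML_0(S)$ : c'est la contradiction voulue.

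Le point d\'elicat est cette derni\`ere \'etape, \`a savoir d\'etecter la pr\'esence d'un arc dans la lamination limite. Le doublement la rend transparente en traduisant la pr\'esence d'une composante de type arc par le fait de couper transversalement $b$, grandeur qui varie contin\^ument via le nombre d'intersection ; joint \`a la divergence $L_n\to+\infty$ issue de la finitude des arcs courts, ceci force la lamination limite \`a vivre dans la partie sans arc $\PML_0(S)$.
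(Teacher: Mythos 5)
Your proof is correct and follows essentially the same route as the paper's: compactify in $\PML(S)$, show that the limit of the offending arcs contains no arc, and conclude with the density/continuity argument of the remark (extension of the inequality $\ell_\eta(Y)\leq\l\ell_\eta(X)$ to arc-free laminations). The only difference is technical and harmless: where the paper normalizes by $i(\a_n,\g_0)\to\infty$ (distinct integer points of $\ML(S)$ escape every compact) and detects arcs in the limit directly via the bound $i(\a_n,b_i)\leq 2$ on intersection with the boundary components, you normalize by $\ell_{\a_n}(X)\to\infty$ (finiteness of short simple orthogeodesics) and detect arcs by doubling along $\partial S$ and using continuity of the intersection number with the doubled boundary.
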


\begin{remark}\begin{enumerate}
\item La proposition est probablement vraie pour tous les arcs, et pas seulement pour les arcs simples. La proposition ci-dessous va dans ce sens.
\item La même preuve permet de montrer que, pour tout $\e>0$, il y a seulement un nombre fini d'arcs simples $\a$ tels que $\ell_\a(Y)\geq(\l+\e) \ell_\a(X)$.
\end{enumerate}
\end{remark}

\begin{proof}
Soit $U\subset\ML(S)$ le cône ouvert formé des laminations géodésiques mesurées $\eta$ telles que $\ell_\eta(Y)<\ell_\eta(X)$. Cet ouvert se projette sur un ouvert $\bar U$ de $\PML(S)$. Considérons $(\a_n)_n$ une suite d'arcs simples \emph{distincts} convergeant vers un point $\a_\infty$ dans $\PML(S)$. Pour prouver la proposition, il suffit de montrer que $\a_\infty$ appartient à $\bar U$.\par
 Les $\a_n$ sont des points entiers distincts de $\ML(S)$. De ce fait, la suite $(\a_n)_n$ n'est contenue dans aucun sous-ensemble compact de $\ML(S)$. Nous en déduisons qu'il existe une géodésique fermée simple $\g_0$ telle que la suite $(i(\a_n,\g_0))_n$ ne soit pas bornée. Quitte à prendre une sous-suite, nous supposons que $i(\a_n,\g_0)$ tend vers l'infini avec $n$.\par
 Pour toute composante de bord $b_i$, le rapport $i(\a_n,b_i)/i(\a_n,\g_0)\leq 2/i(\a_n,\g_0)$ tend vers $0$ quand $n$ tend vers l'infini. Nous en déduisons qu'aucune feuille de la lamination sous-jacente à $\a_\infty$ n'est un arc. Suivant la remarque~\ref{rem:contraction} (2), la classe projective $\a_\infty$ appartient à $\bar U$.
\end{proof}

\begin{proposition}
Soit $X$ une surface hyperbolique compacte à bord non vide. Soit $Y$ le c\oe ur convexe d'une surface hyperbolique $\widehat Y$ obtenue en ajoutant une couronne à l'une des vasques de l'extension de Nielsen $\widehat X$ de $X$. Alors, seul un nombre fini d'arcs $\a$ satisfont $\ell_\a(Y)\geq\ell_\a(X)$. 
\end{proposition}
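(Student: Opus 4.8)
The plan is to establish a uniform affine comparison of the form $\ell_\a(Y)\leq \l\,\ell_\a(X)+C$, valid for \emph{every} arc $\a$, with a contraction factor $0<\l<1$ and an additive constant $C\geq 0$ independent of $\a$. Granting this, the inequality $\ell_\a(Y)\geq \ell_\a(X)$ forces $\ell_\a(X)\leq C/(1-\l)$; and since $X$ is compact, only finitely many arc classes have their orthogeodesic representative of length below a fixed bound. This yields the statement.

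To produce the comparison, I would first set $\l=\max_{X}\phi$, where $\phi=\|\diff f\|$ is the conformal factor of the inclusion $f:\widehat X\hookrightarrow\widehat Y$. As $\phi<1$ on $\widehat X$ and the convex core $X$ is compact, one has $\l<1$. Let $\a_X$ be the geodesic representative of $\a$ in $X$, orthogonal to $\partial X$; it lies in the compact core $X$, so the image curve $f(\a_X)$ in $\widehat Y$ has length $\int_{\a_X}\phi\,\diff s\leq \l\,\ell_\a(X)$, because $f$ scales lengths by $\phi$.

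The endpoints of $f(\a_X)$ sit on $f(\partial X)$ rather than on $\partial Y$, so $f(\a_X)$ is not yet a competitor for $\a_Y$. Here I would use that $f(\partial X)$ is a fixed compact set and that each simple closed curve $f(b_i)$ cobounds an annulus in $\widehat Y$ with the corresponding boundary geodesic $\partial Y_i$ of $Y$. Joining each endpoint of $f(\a_X)$ to $\partial Y$ by a path crossing this annulus, of length at most its width, produces an arc from $\partial Y$ to $\partial Y$. Since homotopies of arcs may slide endpoints along $\partial Y$ and the connecting paths do not wind around the annuli, this concatenation represents the class $\a_Y$. Its length exceeds that of $f(\a_X)$ by at most $2\sup_{z\in f(\partial X)}d(z,\partial Y)=:C$, a finite constant independent of $\a$. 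Hence $\ell_\a(Y)\leq\l\,\ell_\a(X)+C$, the comparison sought.

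The main obstacle is this middle step: controlling the competitor arc. One must check simultaneously that the curve built from $f(\a_X)$ genuinely represents $\a_Y$ (the homotopy class, rel sliding boundary, must survive both $f$ and the connecting paths) and that the length correction is bounded uniformly in $\a$. The uncertainty recorded in Remark~\ref{rem:contraction}(3) — whether $f$ sends $X$ into the interior of $Y$ — is exactly what makes the naive bound $\ell_\a(Y)\leq\l\,\ell_\a(X)$ unavailable and forces the additive term $C$; it is this term that leaves room for the finitely many arcs which may fail to shorten, in accordance with Proposition~\ref{pro:bridgeman}.
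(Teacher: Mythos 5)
Votre argument est correct et suit essentiellement la preuve du papier : une majoration affine $\ell_\a(Y)\leq\l\,\ell_\a(X)+C$ obtenue par la contraction sur le c\oe ur compact plus des chemins de raccord de longueur uniform�ment born�e pr�s du bord, puis la finitude des classes d'arcs v�rifiant $\ell_\a(X)\leq C/(1-\l)$. Le papier r�gle votre \emph{obstacle principal} en prenant pour chemins de raccord les traces $t\mapsto H_i(t,x)$ d'isotopies choisies entre les $b_i$ et les g�od�siques de bord de $Y$, ce qui rend automatique la classe d'homotopie de la concat�nation et �vite toute discussion d'anneaux plong�s ou d'enroulement.
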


\begin{proof}
Pour chaque composante de bord $b_i$ de $X$, nous choisissons une isotopie $H_i$ entre $b_i$ et la géodésique de $\widehat Y$ isotope à $b_i$. Nous notons $A$ la longueur maximale des chemins $t\mapsto H_i(t,x)$ où $x$ est un point du bord $\partial X$.  
Par le lemme de contraction, il existe $0<\l<1$ tel que l'inclusion $\widehat X\hookrightarrow \widehat Y$ contracte les longueurs d'un facteur $\l$.\par
 Soit $a:[0,1]\rightarrow X$ un arc géodésique orthogonal à $\partial X$ en ses extrémités, et soit $\a$ la classe d'isotopie de cet arc. En concaténant $a$ avec les chemins $t\mapsto H_{i_0}(t,a(0))$ et $t\mapsto H_{i_1}(t,a(1))$ nous obtenons un arc de $\widehat Y$ de longueur majorée par $\l\ell(a)+2A$. Nous en déduisons que si $\ell_\a(X)\geq 2A/(1-\l)$ alors $\ell_\a(Y)\leq\ell_\a(X)$.\par
  Nous concluons en remarquant qu'il y a seulement un nombre fini de classes d'isotopie d'arcs $\a$ de longueurs $\ell_\a(X)<2A/(1-\l)$.
\end{proof}

\subsection{Non contraction des géodésiques}
 Nous présentons une preuve courte et originale d'un résultat bien connu~: 

\begin{proposition}[Thurston]
Soient $X$ et $Y$ deux métriques hyperboliques sur une surface fermée $S$. Si $\ell_\g(Y)\leq\ell_\g(X)$ pour toute classe d'isotopie de courbe fermée simple $\g$, alors les métriques $X$ et $Y$ sont isotopes.
\end{proposition}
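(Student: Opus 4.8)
The plan is to convert the global length inequality into an infinitesimal statement using the Weil--Petersson geometry of $\teich(S)$, and then to invoke the impossibility of shortening all closed geodesics to first order.

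Assume, for contradiction, that $X$ and $Y$ are not isotopic. Since the Weil--Petersson metric on $\teich(S)$ is uniquely geodesic, there is a unique, non-constant Weil--Petersson geodesic $c\colon[0,1]\to\teich(S)$ with $c(0)=X$ and $c(1)=Y$; write $v=c'(0)\neq 0$. For a simple closed curve $\gamma$ put $\phi_\gamma(t)=\ell_\gamma(c(t))$. By Wolpert's theorem the Hessian of the geodesic length function $\ell_\gamma$ is positive definite, so $\phi_\gamma$ is strictly convex along $c$; convexity gives $\phi_\gamma(1)\ge\phi_\gamma(0)+\phi_\gamma'(0)$, and together with the hypothesis $\phi_\gamma(1)=\ell_\gamma(Y)\le\ell_\gamma(X)=\phi_\gamma(0)$ this forces $\phi_\gamma'(0)\le 0$. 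Thus $\diff\ell_\gamma(v)\le 0$ for every simple closed curve $\gamma$.

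It remains to contradict the existence of a nonzero tangent vector $v$ along which no geodesic length grows. This is exactly the infinitesimal form of the theorem: at every point of $\teich(S)$ the differentials $\{\diff\ell_\gamma\}$ of the closed geodesics positively span the cotangent space, so no nonzero $v$ can satisfy $\diff\ell_\gamma(v)\le 0$ for all $\gamma$ simultaneously. This yields $v=0$, hence $c$ is constant and $X$, $Y$ are isotopic.

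I expect this last step to be the main obstacle, since it carries the genuine content of the statement while the convexity argument only serves to integrate it. It is classical (Thurston, Kerckhoff, Wolpert) and can be obtained, for instance, from Gardiner's variational formula for $\diff\ell_\gamma$ combined with the Hubbard--Masur theorem, which together show that the covectors $\diff\ell_\eta$ produced by measured laminations $\eta\in\ML(S)$ fill the cotangent space positively (and simple closed curves suffice by density); alternatively one may deduce it from the strict convexity and properness of the length functions of filling laminations, using again Wolpert's positive Hessians.
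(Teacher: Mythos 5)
Your reduction to an infinitesimal statement is correct as far as it goes: joining $X$ to $Y$ by the unique Weil--Petersson geodesic and invoking Wolpert's strict convexity does give $\diff\ell_\gamma(v)\le 0$ (in fact $<0$) for every simple closed curve $\gamma$, with $v=c'(0)\neq 0$. But the whole weight of the argument then falls on the asserted fact that the covectors $\diff\ell_\gamma$ positively span the cotangent space, i.e.\ that no nonzero tangent vector makes every geodesic-length differential nonpositive. As you yourself note, this is the genuine content; it is precisely the infinitesimal form of the proposition (equivalently, the positivity of Thurston's stretch norm), so citing it rather than proving it leaves a real gap, and the two justifications you sketch do not fill it. Gardiner's variational formula combined with the Hubbard--Masur theorem shows that the differentials of \emph{extremal} length functions of measured laminations sweep out the cotangent space; for \emph{hyperbolic} length the differential is a relative Poincar\'e series, there is no Hubbard--Masur-type surjectivity statement, and the ``positive filling'' of the cotangent space does not follow as stated. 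The alternative route via strict convexity and properness of the length of a filling lamination also fails as formulated: at the unique minimum of such a length function all directional derivatives vanish, so a single filling lamination can never exclude a nonzero $v$ with $\diff\ell_\eta(v)\le 0$ for all $\eta$; to conclude one would have to choose the lamination in terms of $v$, which is exactly the missing theorem (proved by Thurston in the minimal stretch maps paper, but by a substantial argument).

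For comparison, the paper avoids the infinitesimal statement altogether by running the convexity in the other direction: parametrize the Weil--Petersson geodesic with $c(0)=[Y]$, $c(t_0)=[X]$; strict convexity and $\ell_\gamma(Y)\le\ell_\gamma(X)$ force $(\ell_\gamma\circ c)'$ to be bounded below by a positive constant on $[t_0,+\infty[$ for each $\gamma$, so the systole increases along $c$, so by Masur's description of the Weil--Petersson completion the geodesic extends to all positive times; applying the uniform positive lower bound on the derivatives to the finitely many geodesics of $X$ of length at most $A$ then pushes the systole above any prescribed $A$, contradicting the boundedness of the systole on $\teich(S)$. That argument is complete modulo standard cited facts (Wolpert's convexity, Masur's theorem, Mumford--Bers boundedness of the systole), whereas yours, as written, reduces the proposition to a statement of at least equal depth. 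Either supply an actual proof of the positive-spanning claim or adopt the extension-beyond-$[X]$ argument.
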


\begin{proof}
Par l'absurde, nous supposons que $X$ et $Y$ ne sont pas isotopes, et que $\ell_\g(Y)\leq\ell_\g(X)$ pour toute classe d'isotopie $\g$. Les points $[X]$ et $[Y]$ de l'espace de Teichmüller $\teich(S)$ étant distincts, il existe une géodésique de Weil-Petersson $t\mapsto c(t)$ telle que $c(0)=[Y]$ et $c(t_0)=[X]$ pour un certain $t_0>0$.\par
 Pour toute classe d'isotopie $\g$, l'hypothèse $\ell_\g(Y)\leq\ell_\g(X)$ et la stricte convexité de $\ell_\g\circ c$ entraînent que la dérivée de $\ell_\g\circ c$ est minorée par une constante positive pour les temps $t\geq t_0$. Nous en déduisons que la systole croît strictement le long de la géodésique $c$ pour les temps $t\geq t_0$. En conséquence, la géodésique $c$ est complète en les temps positifs (théorème de Masur, voir \textsection~\ref{sec:rappels}).\par
 Fixons un nombre réel positif $A$, et notons $\g_1,\ldots,\g_n$ les géodésiques fermées de $X$ de longueur au plus $A$. Nous venons de voir que les dérivées des fonctions $\ell_{\g_i}\circ c$ sont minorées par une même constante positive sur $[t_0,+\infty[$. Comme les autres fonctions longueur de géodésique sont croissantes le long de $c$, nous en déduisons que $\sys(c(t))\geq A$ pour les temps $t$ suffisamment grand. Ceci contredit le fait que la systole est bornée sur l'espace de Teichmüller.
\end{proof}


\begin{corollary}
Soient $X$ et $Y$ deux métriques hyperboliques non isotopes sur une surface fermée orientable $S$. Alors il existe une infinité de classes d'isotopie de courbes fermées simples $\g$ telles que $\ell_{\g}(Y)<\ell_{\g}(X)$.
\end{corollary}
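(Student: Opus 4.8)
Le plan est de produire d'abord une seule in\'egalit\'e stricte dans le bon sens, gr\^ace \`a la proposition de Thurston, puis de la propager \`a une infinit\'e de courbes par des twists de Dehn. Pour la premi\`ere \'etape, j'appliquerais la proposition de Thurston en \'echangeant les r\^oles de $X$ et $Y$~: comme les deux m\'etriques ne sont pas isotopes, sa contrapos\'ee fournit une classe d'isotopie de courbe ferm\'ee simple $\g_0$ telle que $\ell_{\g_0}(X)>\ell_{\g_0}(Y)$, c'est-\`a-dire $\ell_{\g_0}(Y)<\ell_{\g_0}(X)$. Cette courbe est essentielle puisque sa longueur est strictement positive.

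Je consid\'ererais ensuite le c\^one $U\subset\ML(S)$ form\'e des laminations g\'eod\'esiques mesur\'ees $\eta$ telles que $\ell_\eta(Y)<\ell_\eta(X)$. Les fonctionnelles longueur \'etant continues sur $\ML(S)$, cet ensemble est ouvert et se projette sur un ouvert $\bar U$ de $\PML(S)$~; la courbe $\g_0$, vue comme point entier de $\ML(S)$, appartient \`a $U$, donc sa classe projective appartient \`a $\bar U$. Inversement, par homog\'en\'eit\'e de la longueur, la pr\'esence dans $\bar U$ de la classe projective d'une courbe ferm\'ee simple $\s$ \'equivaut \`a l'in\'egalit\'e $\ell_\s(Y)<\ell_\s(X)$. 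Il suffit donc d'exhiber une infinit\'e de courbes ferm\'ees simples dont la classe projective tombe dans $\bar U$.

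Pour cela, je choisirais une courbe ferm\'ee simple $\d$ intersectant $\g_0$ (une telle courbe existe, $S$ \'etant une surface ferm\'ee orientable de genre au moins deux), et je regarderais ses it\'er\'ees par le twist de Dehn $\tau_{\g_0}$. Comme $i(\d,\g_0)>0$, les classes $\tau_{\g_0}^n(\d)$ sont deux \`a deux distinctes et la suite $(\tau_{\g_0}^n(\d))_n$ converge vers $\g_0$ dans $\PML(S)$ (proposition~3.4 de \cite{farb}). L'ouverture de $\bar U$ donne alors $\tau_{\g_0}^n(\d)\in\bar U$ pour tout $n$ assez grand, d'o\`u l'infinit\'e de courbes cherch\'ees.

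Je m'attends \`a ce que le c\oe ur de l'argument soit pr\'ecis\'ement cette derni\`ere \'etape~: c'est la conjonction de la convergence projective des it\'er\'ees de twist vers $\g_0$ et de l'ouverture de $\bar U$ qui transforme une unique in\'egalit\'e stricte en une infinit\'e. Tout le reste rel\`eve de manipulations formelles sur les fonctionnelles longueur et leur projectivisation.
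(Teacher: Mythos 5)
Votre argument est correct. La mise en place est identique \`a celle du texte~: on consid\`ere le c\^one ouvert $U\subset\ML(S)$ des laminations $\eta$ avec $\ell_\eta(Y)<\ell_\eta(X)$ et sa projection $\bar U\subset\PML(S)$, rendue non vide par la contrapos\'ee de la proposition de Thurston appliqu\'ee en \'echangeant les r\^oles de $X$ et $Y$ (le texte fait exactement cela, quoique de fa\c{c}on implicite). La diff\'erence r\'eside dans l'\'etape finale~: le texte invoque la minimalit\'e de l'action du groupe modulaire sur $\PML(S)$ (toute orbite est dense, donc l'ouvert non vide $\bar U$ contient une infinit\'e de classes de courbes ferm\'ees simples), tandis que vous propagez l'in\'egalit\'e par les it\'er\'ees d'un twist de Dehn $\tau_{\g_0}^n(\d)$ convergeant projectivement vers $\g_0\in\bar U$. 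Votre variante est plus \'el\'ementaire et explicite --- elle n'utilise que la convergence des it\'er\'ees de twist (proposition~3.4 de \cite{farb}), exactement l'outil que le texte emploie d'ailleurs pour sa premi\`ere proposition sur les arcs --- et elle d\'esigne concr\`etement les courbes obtenues (toutes de la forme $\tau_{\g_0}^n(\d)$, donc toutes intersectant $\g_0$); l'approche par minimalit\'e est plus courte et montre en prime que les courbes v\'erifiant l'in\'egalit\'e forment une partie dense de $\PML(S)$, au prix d'un th\'eor\`eme plus profond. Les points que vous laissez \`a l'\'etat d'assertion (existence d'une courbe simple $\d$ rencontrant $\g_0$ sur une surface ferm\'ee de genre au moins deux, distinction deux \`a deux des $\tau_{\g_0}^n(\d)$ lorsque $i(\d,\g_0)>0$) sont standard et ne posent pas de difficult\'e.
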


\begin{proof}
Soit $U\subset\ML(S)$ le cône ouvert formé des laminations géodésiques mesurées $\eta$ telles que $\ell_\eta(Y)<\ell_\eta(X)$. Cet ouvert se projette sur un ouvert $\bar U$ de $\PML(S)$. Selon la proposition précédente, les ouverts $U$ et $\bar U$ sont non vides.\par
 Comme l'action du groupe modulaire sur $\PML(S)$ est minimale, il vient que $\bar U$ contient une infinité de courbes fermées simples.
\end{proof}

\section{Longueur minimale des lacets avec auto-intersection}\label{sec:intersection}

 Dans cette partie, nous étendons un théorème de A.~Yamada (\cite{yamada}) aux surfaces non orientables.
Pour des références et une preuve différente de celle de Yamada on peut consulter le \textsection~4.2 de \cite{buser}. 

\begin{theorem}
Si $\g$ est une géodésique fermée primitive et non simple d'une surface hyperbolique (éventuellement à bord géodésique), alors
$\ell(\g)\geq 2\mrm{arccosh}(3)$.
Le cas d'égalité est uniquement réalisé par les trois géodésiques réalisant la systole du pantalon à trois pointes.
\end{theorem}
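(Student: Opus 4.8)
The plan is to use the Schwarz lemma exactly as in the contraction lemma above: to pinch the boundary of the subsurface carrying $\gamma$ down to cusps, reducing the problem to a finite list of cusped model surfaces on which the extremal length can be computed by hand.

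First I would reduce to the subsurface filled by a single self-intersection. As $\gamma$ is not simple it has a double point $p$; splitting $\gamma$ at $p$ yields two geodesic loops whose classes generate a subgroup $\Gamma'\leqslant\pi_1(X)$, necessarily free of rank $2$ (were it of rank $1$ the two strands would share an axis, contradicting a transverse double point). The cover $\hat X=\Hyp/\Gamma'\to X$ is a local isometry, so the lift $\hat\gamma$ is again a primitive non-simple closed geodesic with $\ell(\hat\gamma)=\ell(\gamma)$, and I may work on $\hat X$. Its convex core $F$ retracts onto a wedge of two circles, so $\chi(F)=-1$ and $F$ is one of the four bordered surfaces of Euler characteristic $-1$: a pair of pants, a one-holed torus, a one-holed Klein bottle, or a projective plane with two holes.

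Next comes the Schwarz step. Capping each boundary component of $F$ by a once-punctured disk produces a complete finite-area cusped surface $F_0$ together with a conformal, non-surjective inclusion $\hat X\hookrightarrow F_0$ (the funnels map into cusp neighbourhoods). By the corollary to the Schwarz lemma this inclusion is contracting, hence $\ell_{\hat\gamma}(F_0)<\ell_{\hat\gamma}(\hat X)=\ell(\gamma)$ as soon as $F$ has non-empty boundary. Writing $\sys^{\mathrm{ns}}$ for the length of the shortest non-simple closed geodesic, we get $\ell(\gamma)\geq \ell_{\hat\gamma}(F_0)\geq \sys^{\mathrm{ns}}(F_0)$, and equality can survive only when there is no boundary left to pinch, i.e. when $X$ already coincides with the cusped model $F_0$ and $\gamma$ realises $\sys^{\mathrm{ns}}(F_0)$. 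Everything is thereby reduced to the four models $S_{0,3}$, $S_{1,1}$, $N_{1,2}$, $N_{2,1}$.

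Finally I would compute on the models. The thrice-punctured sphere $\Hyp/\Gamma(2)$ is rigid: imposing the congruence conditions modulo $2$, the smallest admissible hyperbolic trace is $\pm 6$, so $2\cosh(\ell/2)=6$, that is $\ell=2\mathrm{arccosh}(3)$, realised by exactly three geodesics, which are necessarily non-simple since $S_{0,3}$ carries no essential non-peripheral simple closed geodesic. I expect the main obstacle to be the three remaining models, which carry moduli: one must show $\sys^{\mathrm{ns}}>2\mathrm{arccosh}(3)$ throughout their moduli spaces. The natural route is to observe that each of these families degenerates, under pinching a two-sided simple closed geodesic, to a configuration dominated by the thrice-punctured sphere, so that $2\mathrm{arccosh}(3)$ is an infimum approached only at the boundary of moduli space and never attained. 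Making this quantitative, by explicit trace estimates or a collar argument for the relevant figure-eight class, is the delicate part, and it is precisely here that the non-orientable surfaces $N_{1,2}$ and $N_{2,1}$, absent from Yamada's original statement, must be treated.
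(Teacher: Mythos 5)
Your strategy coincides in outline with the paper's (pass to a subsurface of Euler characteristic $-1$ carrying the self-intersection, use the Schwarz lemma to reduce to complete cusped models, then settle the models), and the Schwarz step itself is correct. But there is a genuine gap exactly where you place ``the delicate part'': after the reduction, the theorem is equivalent to the assertion that on the three cusped models which carry moduli, namely $S_{1,1}$, $N_{1,2}$ and $N_{2,1}$, every non-simple closed geodesic has length strictly greater than $2\mathrm{arccosh}(3)$, and you do not prove this. The degeneration sketch is not an argument: pinching a simple closed geodesic sends to infinity the length of every geodesic crossing it, while the Schwarz monotonicity goes the other way (the cusped surface has the \emph{smallest} lengths in its family), so neither the claim that $2\mathrm{arccosh}(3)$ is the infimum of the non-simple systole over these moduli spaces, nor the claim that it is approached only at the boundary and never attained, follows from what you write; establishing such an estimate is the actual content of the theorem. (A smaller point: in the equality case you also need the covering $\Hyp/\langle a,b\rangle\rightarrow X$ to have degree one, e.g.\ by an area count, before concluding that $X$ itself is the thrice-punctured sphere rather than merely covered by it.)

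The paper closes this gap differently, and also shortens your list of models. It first reduces to figure-eight geodesics (Buser's Theorem~4.2.4, \cite{buser}); the smallest geodesically bordered subsurface containing a figure-eight is a pair of pants, a one-holed Klein bottle or a two-holed projective plane, never a one-holed torus, so $S_{1,1}$ does not occur --- your splitting at a single double point, although correct (if $\langle a,b\rangle$ were cyclic the two strands at the double point would be axes of $ab$ and $ba=ab$, hence could not cross transversally), lets the punctured-torus group into the picture. Then, after pinching the boundary to cusps by the contraction lemma as you do, the paper decomposes each cusped model into two ideal triangles along an arc system invariant under the hyperelliptic involution, with at least one arc fixed pointwise; the two triangles are therefore mirror images along that arc, and comparing $\gamma$ with the resulting common perpendiculars yields $\ell(\gamma)\geq 2\mathrm{arccosh}(3)$ uniformly over the moduli of the non-orientable models and identifies the equality case. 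Your trace computation on $S_{0,3}$ is fine, but without an estimate of this kind on the models with moduli the proposal does not reach the stated theorem.
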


\begin{proof}
Nous nous ramenons au cas où $\g$ est homéomorphe à un huit en suivant la preuve du théorème~4.2.4 de \cite{buser}. Nous appelons $X$ la plus petite sous-surface à bord géodésique contenant $\g$. Cette sous-surface se rétracte sur un voisinage tubulaire de $\g$, d'où sa caractéristique d'Euler-Poincaré vaut $-1$. Ainsi $X$  est un pantalon, une bouteille de Klein à un bord, ou un plan projectif à deux bords. Ces différents cas sont représentés sur la première colonne de la figure~\ref{fig:trigonometrie}, les bords grisés sont auto-recollés.\par
  
\begin{figure}[ht]
\centering
\psfrag{A}{$\g(t_0)$}\psfrag{B}{$\g(t_2)$}
\includegraphics[totalheight=9cm,keepaspectratio=true]{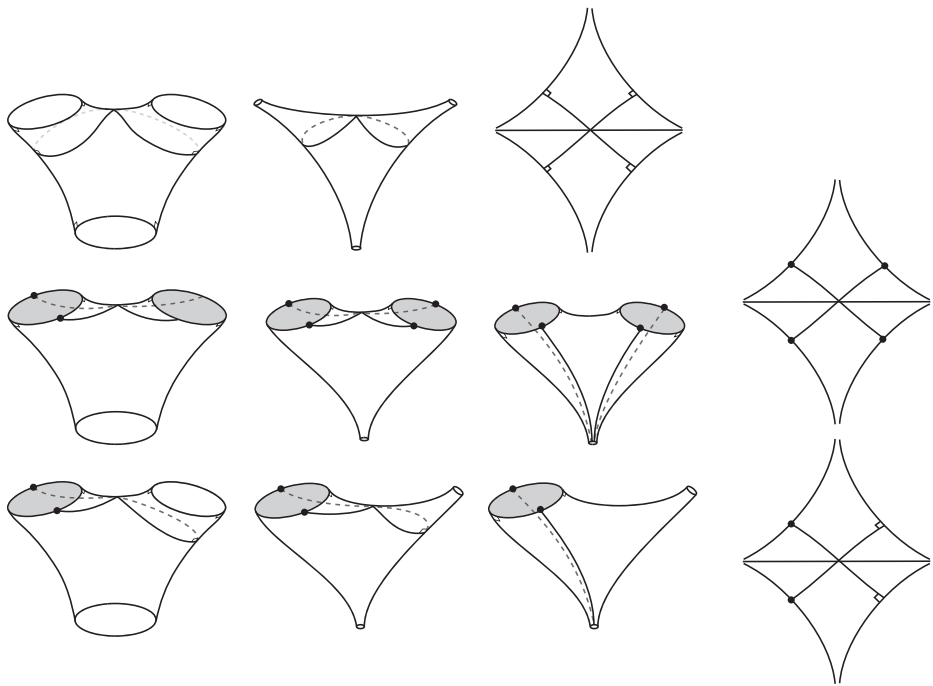}
\caption{Configurations topologiques}\label{fig:trigonometrie}
\end{figure}
  
 Quitte à appliquer le lemme de contraction (\textsection~\ref{sec:geodesics}), nous supposons les bords de $X$ de longueur nulle (deuxième colonne de la figure~\ref{fig:trigonometrie}). En découpant trois arcs, nous décomposons $X$ en deux triangles idéaux. Un, deux ou tous les arcs sont supportés par le lieu des points fixes de l'involution hyperelliptique $\iota_X$ de $X$. Rappelons que, dans chaque pantalon de la figure~\ref{fig:trigonometrie}, ce lieu des points fixes consiste en l'union des perpendiculaires communes (voir la partie~1 de \cite{paysage}). Nous complétons les arcs fixés point-à-point par $\iota_X$ en un système maximal d'arcs comme indiqué sur la troisième colonne de la figure~\ref{fig:trigonometrie}. Le système d'arcs obtenu est globalement invariant par $\iota_X$, et au moins un des arc est fixé point-à-point par $\iota_X$. Ainsi, les deux triangles idéaux sont images l'un de l'autre par la réflexion suivant un des arcs (quatrième colonne de la figure~\ref{fig:trigonometrie}). Nous voyons tout de suite que la longueur de $\g$ est supérieure à la longueur des perpendiculaires communes représentées sur le dessin de droite de la première ligne de la figure~\ref{fig:trigonometrie}. 
\end{proof}

\section{Minoration du rayon d'injectivité des surfaces hyperboliques}\label{sec:yamada}

Dans cette partie, nous donnons une preuve nouvelle du théorème ci-dessous dû à A.~Yamada (\cite{yamada}). Cette preuve a l'avantage d'être particulièrement courte, complète (nous traitons aussi le cas d'égalité), et ne nécessite que peu de calculs. Une autre preuve du théorème de Yamada a aussi été obtenue par F.~Fanoni en utilisant les méthodes développées dans \cite{fanoni}.\par

\begin{notations}
\'Etant donnée une surface hyperbolique $X$, nous notons $\ri_x(X)$ le rayon d'injectivité au point $x\in X$, et $\ri(X)=\sup_{x\in X} \ri_x(X)$ la borne supérieure du rayon d'injectivité sur $X$.
\end{notations}

\begin{theoremnonumber}[Yamada]\label{th:yamada}
Soit $X$ une surface hyperbolique orientable sans bord. Nous avons $\ri(X)\geq\mrm{arcsinh}(2/\sqrt{3})$, avec égalité si et seulement si $X$ est isométrique à la sphère à trois pointes.
\end{theoremnonumber}

\begin{remark}\begin{enumerate}
\item La surface n'est pas supposée d'aire finie.
\item Fanoni (\cite{fanoni}) a établi une inégalité du même type pour les orbisurfaces hyperboliques.
\end{enumerate}
\end{remark}

Notre preuve tient en deux étapes~: on se ramène au cas à pointes (lemme~\ref{lem:minoration} et proposition~\ref{pro:reduction}), puis on traite ce cas en utilisant la géométrie des régions cuspidales (proposition~\ref{pro:pointes}). Nous ferons appel à différentes techniques~: ligne de partage, lemme de Schwarz, cercles isométriques, empilement d'horodisques. Nous effectuerons au \textsection~\ref{sec:cercles}  des rappels concernant les régions cuspidales et les cercles isométriques. Les configurations de cercles isométriques expliquent (à travers la proposition~\ref{pro:sorvali} et le lemme~\ref{lem:minoration-distance}) pourquoi la borne $\mrm{arcsinh}(2/\sqrt{3})$ n'est plus valable en dimensions supérieures (comparer avec le théorème~1.3 de \cite{gendulphe}).\par

\subsection{Existence d'une petite géodésique fermée dans le cas sans pointe}

\begin{lemma}\label{lem:minoration}
Soit $X$ une surface hyperbolique orientable sans pointe n'admettant aucune géodésique fermée de longueur inférieure à $2\mrm{arcsinh}(\sqrt{3}/2)$. Alors le rayon d'injectivité de $X$ vérifie $\ri(X)>\mrm{arcsinh}(2/\sqrt{3})$.
\end{lemma}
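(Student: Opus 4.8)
The plan is to argue by contradiction: I assume $\ri(X)\le\mrm{arcsinh}(2/\sqrt{3})$ and produce a closed geodesic of length $<2\mrm{arcsinh}(\sqrt{3}/2)$, contradicting the hypothesis. First I would dispose of the non-compact case. Since $X$ is complete and has no cusp, if it has infinite area then it carries a funnel, along which $\ri$ is unbounded; thus $\ri(X)=+\infty$ and there is nothing to prove. We may therefore assume $X$ compact, so that $\ri_x$ attains its maximum $r:=\ri(X)$ at some $x_0$. Observe that the hypothesis on closed geodesics reads $\sys(X)\ge 2\mrm{arcsinh}(\sqrt{3}/2)$, and since $\sys(X)=2\inf_x\ri_x(X)$ we have $\ri_x\ge\mrm{arcsinh}(\sqrt{3}/2)$ everywhere.

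Next I would exploit the maximality of the embedded disk $D(x_0,r)$. Lift to $\Hyp$ with $\tilde x_0$ a chosen preimage; the isometries $\g$ with minimal displacement $d(\tilde x_0,\g\tilde x_0)=2r$ correspond to the shortest geodesic loops based at $x_0$. Because $x_0$ is a local maximum of $\ri$, the directions away from the axes of these $\g$ positively span the tangent plane, so their axes surround $\tilde x_0$ and there are at least three of them. For such a $\g$, of translation length $\ell_\g$ and axis at distance $t_\g$ from $\tilde x_0$, the displacement formula gives $\sinh r=\cosh t_\g\,\sinh(\ell_\g/2)$. Combined with the systole bound $\sinh(\ell_\g/2)\ge\sqrt{3}/2$ and the assumption $\sinh r\le 2/\sqrt{3}$, this forces $\cosh t_\g\le 4/3$, i.e. each of these axes passes within $\mrm{arccosh}(4/3)$ of $\tilde x_0$. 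Moreover each such axis projects to a closed geodesic of length $\ell_\g\le 2\mrm{arcsinh}(2/\sqrt{3})<2\mrm{arccosh}(3)$, hence \emph{simple} by the theorem of Section~\ref{sec:intersection}.

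I am thus reduced to a cluster of simple closed geodesics of length at most $2\mrm{arcsinh}(2/\sqrt{3})$ whose lifts surround $\tilde x_0$, each passing within $\mrm{arccosh}(4/3)$ of it. The hard part will be to prove that such a configuration is incompatible with $\sys(X)\ge 2\mrm{arcsinh}(\sqrt{3}/2)$. Taking two of these axes consecutive in the cyclic order around $\tilde x_0$, I would compute the exact hyperbolic trigonometry of the right-angled quadrilateral (resp. hexagon) formed by $\tilde x_0$, the feet of the two perpendiculars, and the common perpendicular of the axes, and read off from it the translation length of a relating word such as $[\g_1,\g_2]$ if the axes cross, or $\g_1\g_2$ if they are disjoint, obtaining a closed geodesic shorter than $2\mrm{arcsinh}(\sqrt{3}/2)$. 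I expect this to be the genuine obstacle: a crude use of the triangle inequality together with the collar lemma only yields a bound infinitesimally below $2\mrm{arcsinh}(\sqrt{3}/2)$, so the estimate is essentially sharp and the exact configuration must be analysed (this is precisely where the configuration of isometric circles does the bookkeeping). The same computation should show that the extremal equality $\sinh r=2/\sqrt{3}$ can occur only when the axes become asymptotic, i.e. when $X$ degenerates to a cusped surface; as this is excluded here, the conclusion is the strict inequality $\ri(X)>\mrm{arcsinh}(2/\sqrt{3})$.
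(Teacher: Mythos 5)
Your proposal has two genuine gaps. First, the reduction to the compact case fails: it is not true that a complete cuspless hyperbolic surface of infinite area must contain a funnel. If the Fuchsian group is infinitely generated the limit set can be the whole circle (for instance the cyclic cover $\widehat S\to S$ of a closed hyperbolic surface associated with a surjection $\pi_1(S)\to\Z$): such a surface has infinite area, no cusp, no funnel, and its injectivity radius is \emph{bounded}, since $\ri_x$ is invariant under the deck group which acts cocompactly. So your dichotomy ``compact or $\ri=+\infty$'' is false, and the existence of a point $x_0$ maximizing $\ri_x$, on which everything that follows rests, is not available. The paper's proof never uses compactness: it works with a maximal family $(\delta_j)$ of disjoint simple closed geodesics of length in $[2\,\mrm{arcsinh}(\sqrt{3}/2),\,2\,\mrm{arcsinh}(2/\sqrt{3})]$ and splits according to whether $x\mapsto d_X(x,\cup_J\delta_j)$ is bounded or not, which covers exactly these non-compact cases.

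Second, and more importantly, the heart of your argument is left as a declared hope. You reduce (modulo the point above) to a configuration of at least three simple closed geodesics of length at most $2\,\mrm{arcsinh}(2/\sqrt{3})$ whose axes surround $\tilde x_0$ within distance $\mrm{arccosh}(4/3)$, and you then say that showing this contradicts $\mathsf{sys}(X)\geq 2\,\mrm{arcsinh}(\sqrt{3}/2)$ is ``the hard part'', to be done by an exact trigonometric analysis of words such as $\gamma_1\gamma_2$ or $[\gamma_1,\gamma_2]$; you even note that crude estimates fall just short. That missing step is precisely where the paper does its quantitative work, and it does it by a different mechanism: instead of analyzing a maximum of $\ri_x$, it takes a vertex $\tilde x_0$ of the \emph{ligne de partage} of the lifts $\tilde\delta_l$, uses the disjointness of the collars of width $\mrm{arcsinh}(\sqrt{3}/2)$ to bound from below the side of the pentagon opposite to $\tilde x_0$ by $2\,\mrm{arcsinh}(\sqrt{3}/2)$, and a trirectangle computation then gives $d(\tilde x_0,\cup\tilde\delta_l)>\mrm{arcsinh}(2/\sqrt{3})$; the conclusion follows from a three-case lower bound on geodesic loops based at $x_0$ (loops disjoint from, isotopic to, or crossing the $\delta_j$). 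Without carrying out your word-length analysis---whose feasibility and sharpness you have not established---the proposal does not prove the lemma; with it, you would still need to repair the non-compact case.
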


\begin{proof}
Nous supposons que $X$ satisfait les hypothèses de l'énoncé. Soit $(\delta_j)_{j\in J}$ une famille maximale de géodésiques fermées simples disjointes satisfaisant 
$$2\mrm{arcsinh}(2/\sqrt{3})\geq \ell(\delta_j)\geq 2\mrm{arcsinh}(\sqrt{3}/2).$$
Les voisinages collier de largeur $\mrm{arcsinh}(\sqrt{3}/2)$ autour des $\delta_j$ sont disjoints (voir \cite{buser} \textsection~4.1). Nous supposons la famille $(\delta_j)_J$ non vide, et la fonction distance $x\mapsto d_X(x,\cup_J \delta_j)$ majorée. Si l'une de ces hypothèses est contredite, alors le lemme est vérifié.\par
 Pour commencer, nous montrons qu'il existe un point $x_0\in X$ à une distance supérieure à $\mrm{arcsinh}(2/\sqrt{3})$ de $\cup_J\delta_j$. Nous notons $(\tilde\delta_l)_{l\in L}$ la famille des géodésiques du revêtement universel $\tilde X$ au-dessus des $\delta_j$. L'ensemble des points de $\tilde X$ dont la distance à $\cup_L\tilde\delta_l$ est réalisée par au moins deux $\tilde\delta_l$ s'appelle la \emph{ligne de partage} (voir \cite{bavard05}). Il s'agit d'un graphe géodésique, dont la projection partage $X$ en anneaux. Comme  $x\mapsto d_X(x,\cup_J \delta_j)$ est supposée bornée, la ligne de partage admet au moins un sommet $\tilde x_0$. La distance $d$ entre $\tilde x_0$ et $\cup_L \tilde\delta_l$ est réalisée par au moins trois géodésiques $\tilde\delta_1,\tilde\delta_2,\tilde\delta_3$. Quitte à changer les indices, nous supposons que l'angle $\f$ entre les segments issus de $\tilde x_0$ orthogonaux à $\tilde\delta_1$ et $\tilde\delta_2$ est au plus $2\pi/3$. Dans le pentagone de la figure~\ref{fig:pentagone}, le côté opposé à $\tilde x_0$ est de longueur supérieure à $2 \mrm{arcsinh}(\sqrt{3}/2)$ (en raison des voisinages collier). Par trigonométrie dans un trirectangle moitié du pentagone nous trouvons $d>\mrm{arcsinh}(2/\sqrt{3})$. Nous notons $x_0$ la projection de $\tilde x_0$ dans $X$.\par
  
\begin{figure}[ht]
\centering
\psfrag{x}{$\tilde x_0$}\psfrag{F}{$\tilde\delta_1$}\psfrag{G}{$\tilde\delta_2$}\psfrag{d}{$d$}\psfrag{p}{$\f$}\includegraphics[totalheight=3cm,keepaspectratio=true]{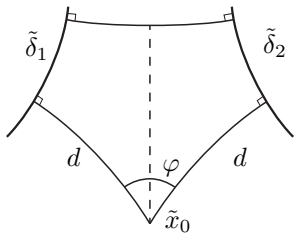}
\caption{Le pentagone}\label{fig:pentagone}
\end{figure}  
  
 Maintenant, nous minorons la longueur des lacets basés en $x_0$ en distinguant trois cas. Soit $\g$ un lacet géodésique basé en $x_0$. Si $\g$ est isotope à une géodésique disjointe de $\cup_J\delta_j$, alors $\ell(\g)>2\mrm{arcsinh}(2/\sqrt{3})$ par définition de $(\delta_j)_{J}$. Si $\g$ est isotope à une géodésique $\delta_j$, alors $\sinh(\g/2)\geq \sinh(\delta_j/2)\cosh(d)> \sqrt{7}/2$. Si $\g$ intersecte $\cup_J\delta_j$, alors $\ell(\g)\geq 2d_X(x_0,\cup_J\delta_j)>2\mrm{arcsinh}(2/\sqrt{3})$.
 \end{proof}

\subsection{Réduction au cas à pointes}

\begin{proposition}\label{pro:reduction}
Pour toute surface hyperbolique orientable sans pointe $X$, il existe une surface hyperbolique orientable à pointes $Y$ telle que $\ri(X)\geq\ri(Y)$.\end{proposition}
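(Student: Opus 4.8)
The plan is to produce $Y$ by opening a short closed geodesic of $X$ into cusps and to control the injectivity radius through the Schwarz lemma, splitting according to whether $X$ has a short geodesic. If $X$ has no closed geodesic of length $<2\mrm{arcsinh}(\sqrt3/2)$, then Lemma~\ref{lem:minoration} already gives $\ri(X)>\mrm{arcsinh}(2/\sqrt3)$, and it suffices to take for $Y$ the thrice--punctured sphere, whose supremal injectivity radius equals $\mrm{arcsinh}(2/\sqrt3)$, so that $\ri(X)>\ri(Y)$. I may therefore assume that the systole satisfies $\sys(X)<2\mrm{arcsinh}(\sqrt3/2)$.

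In that case let $\delta$ be a systolic geodesic, which is necessarily simple. Cutting $X$ along $\delta$ produces a (possibly disconnected) hyperbolic surface $X_\delta$ with geodesic boundary, and I let $Y$ be the complete hyperbolic surface obtained by pinching $\delta$, that is, by replacing each boundary geodesic of $X_\delta$ by a cusp. This $Y$ carries cusps, as required, and the whole matter is to establish $\ri(Y)\le\ri(X)$.

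I would obtain this comparison from the Schwarz lemma as follows. Consider the Nielsen extension $\widehat{X_\delta}$ of $X_\delta$: its convex core is isometric to $X_\delta$, and the metric of $X_\delta$ is the one induced from $X$, so on $X_\delta$ the metrics of $\widehat{X_\delta}$ and of $X$ coincide. Conformally a funnel is an annulus of finite modulus while a cusp neighbourhood has infinite modulus, so the funnel ends of $\widehat{X_\delta}$ embed conformally into the cusp ends of $Y$; replacing funnels by cusps thus realizes a conformal inclusion $\widehat{X_\delta}\hookrightarrow Y$, which by the corollary to the Schwarz lemma is contracting. Hence the hyperbolic metric of $Y$ is pointwise dominated by that of $\widehat{X_\delta}$, and so by that of $X$, on all of $X_\delta$. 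Consequently distances in $Y$ are no larger than in $X$ there, and for $p\in X_\delta$ and $\rho>0$, as long as the $Y$--disk of radius $\rho$ about $p$ stays inside $X_\delta$, it contains the $X$--disk of radius $\rho$ about $p$; if the former is embedded so is the latter, whence $\ri_p(X)\ge\rho$ and therefore $\ri_p(X)\ge\ri_p(Y)$.

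The remaining step, which I expect to be the main obstacle, is to guarantee that the supremum defining $\ri(Y)$ is approached at points of $X_\delta$ and that the maximal embedded disk there does not spill into the two new cusps. Along a cusp the injectivity radius decreases to $0$, so a largest embedded disk cannot sit arbitrarily far down a cusp; one must nonetheless locate it precisely in the thick part of $X_\delta$ and handle the interaction of large disks with the pinching locus $\delta$. Making this last point rigorous is exactly where the geometry of cuspidal regions prepared for Proposition~\ref{pro:pointes} would enter, and it is the delicate part of the argument rather than the conformal comparison itself.
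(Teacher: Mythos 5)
Your construction is the same as the paper's: the case split through Lemma~\ref{lem:minoration} with the thrice-punctured sphere in the easy case, then cutting along a short simple closed geodesic, passing to the Nielsen extension and capping the funnels by punctured disks so that the Schwarz lemma makes the inclusion contracting. Two secondary points: you should work with a single connected component of $X\setminus\delta$ (surfaces are assumed connected), and $Y$ must be \emph{defined} by gluing punctured disks onto the conformal compactification of the funnels and then uniformizing; if $Y$ is first defined abstractly as a pinched surface, the existence of a conformal inclusion of $\widehat{X_\delta}$ extending the identity is not automatic, whereas with the gluing construction it holds by construction.

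The genuine gap is the comparison step, and it is exactly the one you flag yourself. You pass from embedded $Y$-disks to embedded $X$-disks, which (i) only works ``as long as the $Y$-disk of radius $\rho$ about $p$ stays inside $X_\delta$'', a condition you cannot verify at the points where $\sup_y \ri_y(Y)$ is approached and which you explicitly leave open, and (ii) as stated conflates embeddedness with respect to two different metrics (a larger subset being embedded for the $Y$-metric says nothing directly about the $X$-exponential map); it can be repaired through the characterization of $\ri_p$ as half the length of the shortest essential geodesic loop at $p$, but then loops crossing $\delta$ must be excluded, and you never invoke the collar lemma nor the quantitative bound $\ell(\delta)<2\,\mrm{arcsinh}(\sqrt{3}/2)$. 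The paper removes both difficulties by running the comparison in the opposite direction: one may assume $\ri(X)\leq\mrm{arcsinh}(2/\sqrt{3})$ (otherwise the thrice-punctured sphere works), so for every $x$ in the chosen component of $X\setminus\delta$ the geodesic loop realizing $\ri_x(X)$ has length at most $2\,\mrm{arcsinh}(2/\sqrt{3})$; by the collar lemma it cannot meet $\delta$, hence it lies in $X'\subset Y$ and is strictly shortened by the contracting inclusion, giving $\ri_x(Y)<\ri_x(X)$ pointwise with no constraint on where the $Y$-disk sits. There remains only to bound the injectivity radius of $Y$ on the added cuspidal ends, which is elementary; in particular one never needs to locate where $\ri(Y)$ is attained. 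To complete your proof you would have to either carry out the cusp-region analysis you postpone, or switch to this loop-based argument.
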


\begin{remark}
Si $\ri(X)\leq \mrm{arcsinh}(2/\sqrt{3})$ alors nous pouvons prendre $Y$ avec au plus deux pointes (voir la preuve ci-dessous), donc distincte du pantalon à trois pointes.
\end{remark}

\begin{proof}
Nous supposons $\ri(X)\leq\mrm{arcsinh}(2/\sqrt{3})$, sinon nous prenons $Y$ isométrique au pantalon à trois pointes. Selon le lemme~\ref{lem:minoration}, il existe une géodésique fermée simple $\delta$ de longueur inférieure à $2\mrm{arcsinh}(\sqrt{3}/2)$.\par
 Soit $X'$ l'extension de Nielsen d'une composante connexe de $X\setminus\delta$. La surface $X'$ contient une ou deux géodésiques s'identifiant à $\delta$. Chaque bout de $X'$ se rétractant sur l'une de ces géodésiques admet un bord conforme, 
  Nous collons un disque épointé sur chacun de ces bords, et nous obtenons ainsi une surface $Y$ munie d'une structure conforme, et par suite d'une métrique hyperbolique. D'après le lemme de Schwarz (voir \textsection~\ref{sec:geodesics}), l'inclusion $X' \hookrightarrow Y$ réduit les distances.\par
 Le rayon d'injectivité (relativement à la surface $X$) en un point quelconque $x\in X\setminus\delta$ est réalisé par un lacet géodésique $\g$ de longueur inférieure ou égale à $2\mrm{arcsinh}(2/\sqrt{3})$. Ce lacet ne peut intersecter $\delta$ sans contredire le lemme du collier (voir \cite{buser}), il est donc contenu dans $X\setminus \delta$. Comme l'inclusion réduit les distances nous avons $\ell_X(\g)>\ell_Y(\g)$, et le rayon d'injectivité de $Y$ en $x$ est inférieur au rayon d'injectivité de $X$ en $x$. Nous contrôlons facilement le rayon d'injectivité dans $Y\setminus X'$. Finalement nous avons bien $\ri(X)\geq\ri(Y)$.
\end{proof}

\subsection{Minoration de $\ri$ pour les surfaces à pointes}

\begin{proposition}\label{pro:pointes}
Une surface hyperbolique orientable à pointes vérifie $$\ri\geq\mrm{arcsinh}(2/\sqrt{3}),$$
avec égalité si et seulement si la surface est isométrique au pantalon à trois pointes.
\end{proposition}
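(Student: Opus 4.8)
L'id\'ee directrice est de plonger le plus grand disque possible \emph{dans} une r\'egion cuspidale, en descendant le long de la pointe jusqu'\`a la hauteur o\`u le lacet horocyclique atteint la longueur voulue. Je rel\`everais une pointe \`a l'infini de $\Hyp$, avec $\Stab_\G(\infty)=\langle T\rangle$ et $T(z)=z+\ell$~; quitte \`a conjuguer par l'homoth\'etie $z\mapsto 2z/\ell$ (qui est une isom\'etrie de $\Hyp$) on normalise $\ell=2$. Pour $z=x+iy$ on a $d(z,T^{k}z)=2\,\mrm{arcsinh}(|k|/y)$, donc $\ri_z(X)\leq\mrm{arcsinh}(1/y)$, avec \'egalit\'e d\`es que $T$ r\'ealise le d\'eplacement minimal. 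Il suffit alors d'exhiber un point $z^{\ast}=x^{\ast}+i\tfrac{\sqrt3}{2}$ tel que $d(z^{\ast},g\,z^{\ast})\geq 2\,\mrm{arcsinh}(2/\sqrt3)$ pour tout $g\in\G\setminus\{1\}$~: on aura alors $\ri(X)\geq\ri_{z^{\ast}}(X)\geq\mrm{arcsinh}(2/\sqrt3)$.

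Les puissances de $T$ ne co\^utent rien~: \`a la hauteur $\sqrt3/2$ on a $d(z^{\ast},T^{k}z^{\ast})=2\,\mrm{arcsinh}(2|k|/\sqrt3)\geq 2\,\mrm{arcsinh}(2/\sqrt3)$, avec \'egalit\'e si et seulement si $|k|=1$. Le c\oe ur de l'affaire est donc le contr\^ole des $g=\bigl(\begin{smallmatrix}a&b\\ c&d\end{smallmatrix}\bigr)$ avec $c\neq0$. Un tel $g$ envoie le plus grand horodisque plong\'e $B=\{\Imaginary z>t_0\}$ sur un horodisque tangent \`a $\R$ en $a/c$, de diam\`etre euclidien $1/(c^{2}t_0)$~; la famille de tous ces translat\'es forme un empilement $\G$-invariant d'int\'erieurs disjoints le long de $\R$. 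Comme l'aire de la r\'egion cuspidale maximale vaut pr\'ecis\'ement $\ell/t_0$, la proposition~\ref{pro:sorvali} de Sepp\"al\"a et Sorvali (qui minore cette aire) se traduit par une majoration de $t_0$ assurant $t_0<\sqrt3/2$. Le point $z^{\ast}$ est alors int\'erieur \`a $B$, et tout $g\,z^{\ast}$ avec $c\neq0$ tombe dans un translat\'e $g(B)$ disjoint de $B$, donc sous la hauteur $t_0$.

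Reste \`a minorer $d(z^{\ast},g\,z^{\ast})$ pour $c\neq0$, et c'est l\`a l'obstacle principal. La seule comparaison des parties imaginaires est bien trop faible~: c'est le d\'eplacement horizontal, gouvern\'e par le cercle isom\'etrique de $g$, qui est d\'ecisif (au point extr\'emal $\r=e^{i\pi/3}$ du pantalon \`a trois pointes, le conjugu\'e $g\,\r$ n'est pas situ\'e \`a la verticale de $\r$). J'utiliserais le lemme~\ref{lem:minoration-distance}, qui fournit exactement cette minoration \`a partir de la configuration des cercles isom\'etriques, en pla\c{c}ant l'abscisse $x^{\ast}$ dans le plus grand intervalle libre de l'empilement d'horodisques. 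Concr\`etement, via $\cosh d=1+|z-gz|^{2}/(2\,\Imaginary z\,\Imaginary gz)$, l'in\'egalit\'e voulue \'equivaut \`a $|z^{\ast}-g\,z^{\ast}|^{2}/(2\,\Imaginary z^{\ast}\,\Imaginary g\,z^{\ast})\geq 8/3$, que je d\'eduirais des donn\'ees des cercles isom\'etriques.

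Enfin, le cas d'\'egalit\'e s'obtient en suivant la saturation de cette estim\'ee de distance~: l'\'egalit\'e impose la configuration sym\'etrique o\`u trois lacets g\'eod\'esiques de m\^eme longueur minimale concourent en $z^{\ast}=\r$, invariante sous une sym\'etrie d'ordre trois permutant les pointes, ce qui reconstruit le groupe du pantalon \`a trois pointes. Je m'attends \`a ce que le point le plus d\'elicat soit pr\'ecis\'ement l'estim\'ee de distance via les cercles isom\'etriques (lemme~\ref{lem:minoration-distance}) et l'analyse de sa saturation.
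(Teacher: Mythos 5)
Votre plan suit une route r\'eellement diff\'erente de celle de l'article (aucun point d'\'equilibre, aucun recours au lemme de Schwarz), mais il comporte une lacune essentielle, que vous signalez d'ailleurs vous-m\^eme comme \emph{l'obstacle principal} sans la combler. Toute la difficult\'e est concentr\'ee dans l'affirmation qu'il existe une abscisse $x^\ast$ telle que $z^\ast=x^\ast+i\sqrt3/2$ soit d\'eplac\'e d'au moins $2\,\mrm{arcsinh}(2/\sqrt3)$ par \emph{tout} $g$ avec $c\neq0$ ; or l'outil que vous invoquez, le lemme~\ref{lem:minoration-distance}, ne traite que les \'el\'ements \emph{hyperboliques} dont l'axe p\'en\`etre $B_\infty$. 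Il ne dit rien des paraboliques fixant une autre pointe, ni des hyperboliques dont l'axe reste bas, et ce sont pr\'ecis\'ement les \'el\'ements critiques pour votre estim\'ee : dans le cas extr\'emal $\G(2)$ (largeur $2$), l'\'el\'ement qui sature l'in\'egalit\'e au point $\r=1/2+i\sqrt3/2$ est le \emph{parabolique} $z\mapsto z/(2z+1)$, fixant la pointe $0$. Le choix de $x^\ast$ \emph{dans le plus grand intervalle libre de l'empilement} n'est pas un argument : il faut montrer que cet intervalle est assez long et que les horodisques voisins (ou les axes bas) ne ruinent pas l'estim\'ee, ce qui est exactement le contenu du th\'eor\`eme. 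L'article contourne cette difficult\'e par une r\'eduction en deux temps que votre sch\'ema ne remplace pas : on choisit d'abord un point $x_0$ o\`u le rayon d'injectivit\'e est r\'ealis\'e par au moins trois lacets, puis le lemme de Schwarz (extension de Nielsen et recollement de disques \'epoint\'es) transforme les g\'eod\'esiques correspondantes en pointes, de sorte qu'il ne reste \`a comparer $\tilde x_0$ qu'\`a des horodisques disjoints de r\'egions cuspidales d'aire $2$, o\`u la configuration tangente donne $2/\sqrt3$.

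Une seconde difficult\'e, plus structurelle : \`a la hauteur $\sqrt3/2$, la translation $T$ d\'eplace $z^\ast$ d'exactement $2\,\mrm{arcsinh}(2/\sqrt3)$, donc $\ri_{z^\ast}(X)\leq\mrm{arcsinh}(2/\sqrt3)$ en tout point de cette horocycle. Votre m\'ethode ne peut donc au mieux fournir que l'\'egalit\'e en $z^\ast$, jamais l'in\'egalit\'e stricte ; or la caract\'erisation du cas d'\'egalit\'e exige de produire, pour $X$ non isom\'etrique au pantalon \`a trois pointes, un point de rayon strictement sup\'erieur, n\'ecessairement situ\'e \emph{sous} la hauteur $\sqrt3/2$ (si le rayon y est r\'ealis\'e par le lacet horocyclique) ou ailleurs dans la surface. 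La phrase sur la \emph{saturation} de l'estim\'ee ne fournit pas cet argument de perturbation/compacit\'e. Enfin, petit point de normalisation \`a v\'erifier si vous voulez tout de m\^eme utiliser le lemme~\ref{lem:minoration-distance} pour les axes p\'en\'etrant $B_\infty$ : ses hypoth\`eses sont \'enonc\'ees avec $B_\infty$ \`a hauteur $1$ et $\Imaginary(z_0)\geq\sqrt3$ ; apr\`es remise \`a l'\'echelle en largeur $2$, la condition devient $\Imaginary(z_0)\geq\sqrt3\,t_0$, satisfaite \`a la hauteur $\sqrt3/2$ gr\^ace \`a $t_0\leq1/2$ (proposition~\ref{pro:sorvali}, ou d\'ej\`a la remarque~\ref{rem:disjoints}) --- cette partie-l\`a de votre esquisse est correcte, mais elle ne couvre qu'une fraction des \'el\'ements \`a contr\^oler.
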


\begin{proof}
Soit $X$ une surface hyperbolique orientable à pointes, que nous supposons non isométrique à la sphère à trois pointes.
Nous allons montrer que le rayon d'injectivité en un certain point $x_0$ est supérieur à $\mrm{arcsinh}(2/\sqrt{3})$.\par 
 Nous choisirons $x_0$ à l'intérieur d'une région cuspidale maximale $C$. Remarquez que l'aire de $C$ est supérieure à $4$ par un théorème de Seppälä et Sorvali (nous renvoyons à l'appendice pour plus de détails, proposition~\ref{pro:sorvali}). Dans la suite, nous notons $h_x$ l'horocycle passant par un point $x\in C$, et $c_x$ le lacet géodésique d'origine $x$ homotope à $h_x$.\par
\emph{1) Le point $x_0$.}  Considérons l'ensemble $C_0$ formé
des points $x\in C$ en lesquels le rayon d'injectivité est réalisé par $c_x$. Nous supposons $C_0$ contenu dans chacune des sous-régions cuspidales d'aire $4/\sqrt{3}+\e$ avec $\e>0$. Dans le cas contraire, la proposition serait trivialement vérifiée vue la relation $\sinh(\ell(c_x)/2) =  \ell(h_x)/2$. Nous en déduisons l'existence d'un point $x_0\in C_0$ réalisant la borne supérieure du rayon d'injectivité sur $C_0$ (travailler sur un sous-ensemble compact de $C_0$).\par
 \emph{2) Il y a au moins trois lacets géodésiques réalisant le rayon d'injectivité en $x_0$.} Sinon, en se déplaçant le long de l'horocycle $h_{x_0}$, nous trouverions un point $x_1$ en lequel le rayon d'injectivité serait uniquement réalisé par le lacet $c_{x_1}$. En partant de $x_1$ et en suivant la géodésique orthogonale à l'horocycle $h_{x_0}$, nous augmenterions le rayon d'injectivité tout en restant dans $C_0$. Ceci contredirait la définition de $x_0$.\par 
 Introduisons quelques notations~: nous appelons $S\subset\pi_1(X,x_0)$ l'ensemble des classes d'homotopie des lacets réalisant le rayon d'injectivité en $x_0$, nous notons $(\g_i)_{i\in I}$ la famille des géodésiques fermées simples appartenant aux classes d'homotopie libre induites par $S$.\par
\emph{3) Les géodésiques $\g_i$ sont disjointes.}
 Nous supposons que ces géodésiques ne pénètrent pas dans $C$, sinon la proposition serait vérifiée vu le lemme~\ref{lem:minoration-distance}. Nous en déduisons $d_X(x_0,\g_i)\geq d_X(x_0,\partial C)> \mrm{arccosh}(2/\sqrt{3})$, car $C_0$ est contenu dans l'adhérence de la région cuspidale d'aire $4/\sqrt{3}$. Nous supposons que les $\g_i$ sont de longueur inférieure à $2\mrm{arcsinh(1)}$, sinon la proposition serait vérifiée vue la formule~\eqref{eq:deplacement}. Ceci implique que les $\g_i$ sont disjointes par le lemme du collier.\par
 
 \emph{4) Application du lemme de Schwarz afin de minorer le rayon d'injectivité de $X$ en $x_0$ par la longueur de lacets périphériques $\a_s$ d'une surface $Y$.} Nous construisons $Y$ comme dans la preuve de la proposition~\ref{pro:reduction}. Nous prenons l'extension de Nielsen $X'$ de la composante connexe de $X\setminus(\cup_I \g_i)$ contenant $C$, puis nous collons des disques épointés sur les bouts provenant des $\g_i$. De cette façon, nous obtenons une surface hyperbolique $Y$ telle que l'inclusion $X'\hookrightarrow Y$ soit conforme, et par conséquent réduise les distances (lemme de Schwarz). Soit $\a_s$ le lacet géodésique de $Y$ dans la classe $s\in S$, nous avons $2\ri_{x_0}(X)> \ell_Y(\a_s)$.\par
 
 \emph{5) Minoration de $ \sup_{S} \ell(\a_s)$.} Dans le revêtement universel $\tilde Y$, nous fixons un relevé $\tilde x_0$ de $x_0$. À chaque classe $s\in S$ correspond un automorphisme parabolique $\d_s$ de $\tilde Y$ tel que le segment $\tilde x_0\delta_s(\tilde x_0)$ se projette sur $\a_s$.  Soit $B_s$ l'horodisque stable par $\delta_s$ qui se projette sur une région cuspidale d'aire $2$. En notant $d_{\tilde Y}(\tilde x_0,B_s)$ la distance orientée (négative si $\tilde x_0\in B_s$), nous avons $\sinh(\a_s/2)=\exp(d_{\tilde Y}(\tilde x_0,B_s))$. Ainsi, minorer $\sup_S \a_s$ revient à minorer $\sup_S d_{\tilde Y}(\tilde x_0,B_s)$.\par
  Soient $s_1,s_2\in S$ tels que l'angle entre les demi-géodésiques issues de $\tilde x_0$ et orthogonales à $B_{s_1}$ et $B_{s_2}$ soit inférieur ou égal à $2\pi/3$. On montre facilement que la quantité 
 $\sup(d_{\tilde Y}(\tilde x_0,B_{s_1}),d_{\tilde Y}(\tilde x_0,B_{s_2}))$
  est minimale lorsque les horodisques $B_{s_1}$ et $B_{s_2}$ sont tangents et $d_{\tilde Y}(\tilde x_0,B_{s_1})=d_{\tilde Y}(\tilde x_0,B_{s_2})$ (rappelons que les $B_s$ sont disjoints, voir remarque~\ref{rem:disjoints}). Ainsi $\sup_S\sinh(\ell(\a_s)/2)\geq2/\sqrt{3}$ avec égalité si et seulement si $Y$ est isométrique à la sphère à trois pointes.
\end{proof}


\bibliographystyle{alpha}
\bibliography{biblio}

\begin{thebibliography}{Yam82}

\bibitem[Ada87]{adams}
C.~Adams.
\newblock The noncompact hyperbolic {$3$}-manifold of minimal volume.
\newblock {\em Proc. Amer. Math. Soc.}, 100(4):601--606, 1987.

\bibitem[Bav05]{bavard05}
C.~Bavard.
\newblock Anneaux extr\'emaux dans les surfaces de {R}iemann.
\newblock {\em Manuscripta Math.}, 117(3):265--271, 2005.

\bibitem[Ber76]{bers}
L.~Bers.
\newblock Nielsen extensions of {R}iemann surfaces.
\newblock {\em Ann. Acad. Sci. Fenn. Math.}, 2:29--34, 1976.

\bibitem[BMP]{balacheff}
F.~Balacheff, E.~Makover, and H.~Parlier.
\newblock Systole growth for finite area hyperbolic surfaces.
\newblock {\em Ann. Fac. Sci. Toulouse Math. (6)}.
\newblock \`A para\^{i}tre.

\bibitem[Bon]{bonahon}
F.~Bonahon.
\newblock {\em Curves on surfaces}.
\newblock Livre en pr\'eparation, disponible sur la pageweb de l'auteur.

\bibitem[Bri11]{bridgeman}
M.~Bridgeman.
\newblock Orthospectra of geodesic laminations and dilogarithm identities on
  moduli space.
\newblock {\em Geom. Topol.}, 15(2):707--733, 2011.

\bibitem[Bro99]{brooks}
R.~Brooks.
\newblock Platonic surfaces.
\newblock {\em Comment. Math. Helv.}, 74(1):156--170, 1999.

\bibitem[Bus92]{buser}
P.~Buser.
\newblock {\em Geometry and spectra of compact {R}iemann surfaces}, volume 106
  of {\em Progress in Mathematics}.
\newblock Birkh\"auser, 1992.

\bibitem[Fan13]{fanoni}
F.~Fanoni.
\newblock The maximum injectivity radius of hyperbolic orbifolds.
\newblock 2013.
\newblock Prépublication arXiv:1307.3159.

\bibitem[FM12]{farb}
B.~Farb and D.~Margalit.
\newblock {\em A primer on mapping class groups}, volume~49 of {\em Princeton
  Mathematical Series}.
\newblock Princeton University Press, 2012.

\bibitem[Gen05]{paysage}
M.~Gendulphe.
\newblock Paysage systolique des surfaces hyperboliques de caractéristique
  $-1$.
\newblock 2005.
\newblock Prépublication disponible à matthieu.gendulphe.com.

\bibitem[Gen11]{gendulphe}
M.~Gendulphe.
\newblock Systole et rayon interne des variétés hyperboliques non compactes.
\newblock 2011.
\newblock Prépublication disponible à matthieu.gendulphe.com.

\bibitem[Gen14]{rayon}
M.~Gendulphe.
\newblock Le lemme de {S}chwarz et la borne sup\'erieure du rayon
  d'injectivit\'e des surfaces.
\newblock 2014.
\newblock Pr\' epublication disponible à matthieu.gendulphe.com.

\bibitem[Hub06]{hubbard}
J.~H. Hubbard.
\newblock {\em Teichm\"uller theory and applications to geometry, topology, and
  dynamics. {V}ol. 1}.
\newblock Matrix Editions, 2006.

\bibitem[IT92]{imayoshi}
Y.~Imayoshi and M.~Taniguchi.
\newblock {\em An introduction to {T}eichm\"uller spaces}.
\newblock Springer-Verlag, 1992.

\bibitem[Mas76]{masur}
H.~Masur.
\newblock Extension of the {W}eil-{P}etersson metric to the boundary of
  {T}eichmuller space.
\newblock {\em Duke Math. J.}, 43(3):623--635, 1976.

\bibitem[Mir07]{mirzakhani}
M.~Mirzakhani.
\newblock Simple geodesics and {W}eil-{P}etersson volumes of moduli spaces of
  bordered {R}iemann surfaces.
\newblock {\em Invent. Math.}, 167(1):179--222, 2007.

\bibitem[Par05]{parlier}
H.~Parlier.
\newblock Lengths of geodesics on {R}iemann surfaces with boundary.
\newblock {\em Ann. Acad. Sci. Fenn. Math.}, 30(2):227--236, 2005.

\bibitem[PH92]{penner}
R.~C. Penner and J.~L. Harer.
\newblock {\em Combinatorics of train tracks}, volume 125 of {\em Annals of
  Mathematics Studies}.
\newblock Princeton University Press, 1992.

\bibitem[PT10]{papadopoulos}
A.~Papadopoulos and G.~Th{\'e}ret.
\newblock Shortening all the simple closed geodesics on surfaces with boundary.
\newblock {\em Proc. Amer. Math. Soc.}, 138(5):1775--1784, 2010.

\bibitem[SS93]{sorvali}
M.~Sepp{\"a}l{\"a} and T.~Sorvali.
\newblock Horocycles on {R}iemann surfaces.
\newblock {\em Proc. Amer. Math. Soc.}, 118(1):109--111, 1993.

\bibitem[Thu86]{thurston}
W.P. Thurston.
\newblock A spine for {T}eichm\"uller space.
\newblock 1986.
\newblock Unpublished manuscript.

\bibitem[Wol87]{wolpert}
S.~Wolpert.
\newblock Geodesic length functions and the {N}ielsen problem.
\newblock {\em J. Differential Geom.}, 25(2):275--296, 1987.

\bibitem[Yam82]{yamada}
A.~Yamada.
\newblock On {M}arden's universal constant of {F}uchsian groups. {II}.
\newblock {\em J. Analyse Math.}, 41:234--248, 1982.

\bibitem[Yau73]{yau}
S.~T. Yau.
\newblock Remarks on conformal transformations.
\newblock {\em J. Differential Geometry}, 8:369--381, 1973.

\end{thebibliography}


\newpage
\appendix

\section{Cercles isométriques et bouts cuspidaux}\label{sec:cercles-isom}

 Nous revenons sur le résultat de M.~Seppälä et T.~Sorvali (\cite{sorvali}) affirmant que toute pointe d'une surface hyperbolique admet un voisinage consistant en une région cuspidale d'aire $4$. Leur méthode utilise les cercles isométriques, nous reprenons cette idée pour prouver le lemme~\ref{lem:minoration-distance} intervenant dans la preuve de la proposition~\ref{pro:pointes}. Les résultats de ce paragraphe sont déjà connus, à l'exception du lemme~\ref{lem:minoration-distance}.\par
  Rappelons qu'une \emph{région cuspidale} est une partie d'une surface hyperbolique isométrique au quotient de l'horodisque $B_\infty=\{z\in\Hyp~;~\Imaginary(z)>1\}$ par un groupe engendré par une transformation $z\mapsto z+\omega$ avec $\omega>0$.\par

\subsubsection{Rappels sur les cercles isométriques}\label{sec:cercles}
 Considérons une homographie
 $$ f(z)=\frac{az+b}{cz+d}\ \textnormal{avec}\ a,b,c,d\in\R\ \textnormal{tels que}\ ad-bc=1.$$
Nous supposons que $f$ ne fixe pas l'infini. L'ensemble $I(f)$ des points $z$ tels que la différentielle $\mrm{d}f(z)$ soit une isométrie est appellé le \emph{cercle isométrique} de $f$. Nous pouvons le caractériser de différentes manières, le lemme suivant est bien connu~: 
\begin{lemma}
Nous avons~:
\begin{enumerate}[i)]
\item $I(f)=\{z\in\C~;~|cz+d|=1\}$,
\item $I(f)=\{z\in\C~;~ f(z)-z\in \R\}$.
\end{enumerate}
\end{lemma}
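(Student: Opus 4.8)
The plan is to read both characterizations off a single computation, namely the derivative of $f$. Since $ad-bc=1$, differentiating $f(z)=(az+b)/(cz+d)$ gives
\[ f'(z)=\frac{a(cz+d)-c(az+b)}{(cz+d)^2}=\frac{ad-bc}{(cz+d)^2}=\frac{1}{(cz+d)^2}. \]
The real differential $\diff f(z)$ is multiplication by the complex number $f'(z)$, hence a Euclidean similarity of ratio $|f'(z)|=1/|cz+d|^2$. It is therefore a Euclidean isometry exactly when $|cz+d|=1$, which is assertion i). The hypothesis that $f$ does not fix $\infty$ (that is $c\neq 0$) ensures that $\{|cz+d|=1\}$ is a genuine circle, of center $-d/c$ and radius $1/|c|$.

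For ii) I would exploit that the coefficients are real, so that $f$ commutes with conjugation: $f(\bar z)=\overline{f(z)}$ wherever defined. The one computational step is the identity
\[ f(z)-f(\bar z)=\frac{(az+b)(c\bar z+d)-(a\bar z+b)(cz+d)}{(cz+d)(c\bar z+d)}=\frac{(ad-bc)(z-\bar z)}{|cz+d|^2}=\frac{z-\bar z}{|cz+d|^2}, \]
where I used $(cz+d)(c\bar z+d)=|cz+d|^2$ and $ad-bc=1$. Writing $f(\bar z)=\overline{f(z)}$, the left side equals $2i\,\Imaginary f(z)$ and $z-\bar z=2i\,\Imaginary z$, so the identity reads $\Imaginary f(z)=(\Imaginary z)/|cz+d|^2$. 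Consequently $f(z)-z\in\R$ if and only if $\Imaginary f(z)=\Imaginary z$, i.e.\ if and only if $(\Imaginary z)\bigl(1/|cz+d|^2-1\bigr)=0$.

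This last equation is the crux, and it also pinpoints the only delicate point. Away from the real axis $z-\bar z\neq 0$, so the condition collapses to $|cz+d|=1$, which is ii); geometrically, the isometric circle is precisely the locus off $\R$ where $f$ preserves imaginary parts. The subtlety I expect to be the main obstacle is the real axis itself: for real $z$ the reality of the coefficients forces $f(z)-z\in\R$ automatically, so the full locus $\{f(z)-z\in\R\}$ is $I(f)\cup\R$ rather than $I(f)$. I would therefore phrase ii) as characterizing $I(f)$ among the points of $\C\setminus\R$ — equivalently, $I(f)$ is the unique circle contained in $\{f(z)-z\in\R\}$ — and remark that this is exactly how the isometric circle is later used, namely inside the upper half-plane $\Hyp$.
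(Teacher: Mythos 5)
Your proof is correct. The paper in fact gives no argument for this lemma (it is stated as ``bien connu''), and your computation is the standard one: $f'(z)=1/(cz+d)^2$, so $\diff f(z)$ is a Euclidean isometry exactly when $|cz+d|=1$, and $\Imaginary f(z)=\Imaginary(z)/|cz+d|^2$ reduces the condition $f(z)-z\in\R$ to $\Imaginary(z)\bigl(1/|cz+d|^2-1\bigr)=0$. Your caveat on ii) is a genuine (if minor) correction to the statement as written: since the coefficients are real, every real $z\neq -d/c$ satisfies $f(z)-z\in\R$, so the locus $\{z\in\C\,:\,f(z)-z\in\R\}$ is $I(f)\cup\R$ (minus the pole), and the asserted set equality holds only off the real axis; this is harmless for the paper, which only invokes ii) at points of $\Hyp$ (where $I(f)$ meets the axis of a hyperbolic $f$), but it is right to flag it, and your reformulation of ii) on $\C\setminus\R$ (or as ``$I(f)$ is the unique circle contained in this locus'') is the correct repair.
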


Regardons plus en détail le cas où $f$ est \emph{hyperbolique}. Pour simplifier, nous supposons que l'axe de $f$ est le cercle centré en l'origine de rayon $r$. Nous notons $d$ la distance de translation de $f$, et $\theta$ l'angle entre les points de l'axe de $f$ se situant à une distance $d/2$ du point $ri$. Clairement, les caractérisations \emph{i)} et \emph{ii)} impliquent~:
\begin{lemma}
Les cercles isométriques $I(f)$ et $I(f^{-1})=f(I(f))$ ont leurs centres sur l'axe réel et sont orthogonaux à l'axe de $f$ en les points $re^{i(\frac{\pi}{2}\pm\theta)}$. En particulier $I(f)$ et $I(f^{-1})$ sont disjoints.
\end{lemma}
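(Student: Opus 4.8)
The plan is to read everything off from the two characterisations of the isometric circle, after using the normalisation to pin down the coefficients of $f$. Since $\Axe(f)$ is the semicircle $|z|=r$, the two fixed points of $f$ are $\pm r$, hence symmetric about the origin. Writing $f(z)=(az+b)/(cz+d)$ as in the setup, these fixed points are the roots of $cz^2+(d-a)z-b=0$, so their sum $(a-d)/c$ vanishes; thus the two diagonal entries coincide and the matrix of $f$ is $\left(\begin{smallmatrix}a&b\\c&a\end{smallmatrix}\right)$ with $a^2-bc=1$ and $c\neq 0$. The fixed-point equation becomes $z^2=b/c$, so $r^2=b/c=(a^2-1)/c^2$, while the half-trace yields $\cosh(d/2)=|a|$, where from now on $d$ stands --- as in the statement --- for the translation distance.

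By characterisation \emph{i)}, $I(f)=\{\,|cz+a|=1\,\}$ is the Euclidean circle of centre $-a/c$ and radius $1/|c|$, and $I(f^{-1})=\{\,|cz-a|=1\,\}$ is the circle of centre $a/c$ and the same radius; in particular both centres lie on the real axis --- the first assertion --- and both isometric circles are themselves geodesics of $\Hyp$. For the orthogonality I would invoke the elementary fact that two Euclidean circles meet at right angles precisely when the squared distance between their centres equals the sum of the squares of their radii. For $I(f)$ and $\Axe(f)$ this reads $(a/c)^2=r^2+(1/c)^2$, which is immediate from $r^2=(a^2-1)/c^2$; the identical computation settles $I(f^{-1})$. (The same identity in fact holds before imposing the normalisation, reflecting the coordinate-free nature of orthogonality.)

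It remains to locate the feet of these perpendiculars, and this is the only step I expect to require care; here I would use characterisation \emph{ii)}. Let $z_0=re^{i\alpha_0}$ be a point of $I(f)\cap\Axe(f)$. Then $f(z_0)=re^{i\beta}$ again lies on the axis, and by \emph{ii)} the difference $f(z_0)-z_0=r(e^{i\beta}-e^{i\alpha_0})$ is real, which forces $\sin\beta=\sin\alpha_0$, hence $\beta=\pi-\alpha_0$ since $f$ moves $z_0$. Thus $f(z_0)$ is the reflection of $z_0$ through the summit $ri$ along the axis, so the midpoint of the axial segment from $z_0$ to $f(z_0)$ is exactly $ri$. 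As $f$ translates along $\Axe(f)$ by $d$, the point $z_0$ lies at distance $d/2$ from $ri$, that is $z_0=re^{i(\pi/2\pm\theta)}$ by the definition of $\theta$. The isometry $z\mapsto-\bar z$ preserves $\Axe(f)$ and exchanges $I(f)$ with $I(f^{-1})$ (it swaps the centres $\mp a/c$), so the two feet are $re^{i(\pi/2+\theta)}$ and $re^{i(\pi/2-\theta)}$. Finally, $I(f)$ and $I(f^{-1})$ are two distinct geodesics perpendicular to $\Axe(f)$ at distinct points; since two perpendiculars to a common geodesic never meet, they are disjoint --- equivalently, the distance $2|a|/|c|$ between their centres exceeds the sum $2/|c|$ of their radii, because $|a|=\cosh(d/2)>1$.
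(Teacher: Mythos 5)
Your proof is correct and follows essentially the approach the paper intends: the paper offers no separate argument beyond asserting that the lemma follows clearly from characterizations \emph{i)} and \emph{ii)}, and your derivation --- the normalization $a=d$ forced by the symmetric fixed points $\pm r$, the centres $\mp a/c$ and radii $1/|c|$ read off from \emph{i)}, the orthogonality check $(a/c)^2=r^2+1/c^2$, the location of the feet via \emph{ii)} combined with the reflection $z\mapsto -\bar z$, and the disjointness from $|a|=\cosh(d/2)>1$ --- is exactly such a derivation, written out in full. The only ingredient you take for granted, as the paper also does, is the standard identity $I(f^{-1})=f(I(f))$, which is immediate from characterization \emph{i)}.
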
 
Des calculs simples donnent les relations
\begin{eqnarray*}
 \sinh(d/2) & = & \tan\theta,\\
 r & = & R \tan\theta .
\end{eqnarray*}
où $R$ désigne le rayon des cercles isométriques $I(f)$ et $I(f^{-1})$.

\subsubsection{Bouts cuspidaux} Les cercles isométriques s'avèrent utiles pour regarder comment une homographie $f$ déplace un horodisque $B_\infty\subset\Hyp$ centré à l'infini. Comme $f$ envoie l'extérieur de $I(f)$ sur l'intérieur de $I(f^{-1})$, il vient~:
\begin{lemma}\label{lem:tangence}
Les horodisques $B_\infty$ et $f(B_\infty)$ sont disjoints (resp. tangents) si et seulement si ils sont disjoints de $I(f^{-1})$ (resp. tangents à $I(f^{-1})$).
\end{lemma}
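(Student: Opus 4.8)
The plan is to reduce every assertion to a comparison of two Euclidean heights, exploiting that $I(f)$ and $I(f^{-1})$ are \emph{congruent} circles. Write $f(z)=(az+b)/(cz+d)$ with $ad-bc=1$ and $c\neq 0$. By the previous lemma, $I(f)$ is the circle $|cz+d|=1$, of radius $R:=1/|c|$ centred at $-d/c\in\R$, while $f^{-1}(z)=(dz-b)/(-cz+a)$ shows that $I(f^{-1})$ is the circle $|{-cz+a}|=1$, of the \emph{same} radius $R$ centred at $a/c\in\R$. Writing $B_\infty=\{z\in\Hyp~;~\Imaginary(z)>t\}$ for an arbitrary horodisk centred at infinity, its position relative to any circle of radius $R$ with centre on $\R$ depends only on $t$: the closed half-plane is disjoint from the closed disk exactly when $t>R$, tangent exactly when $t=R$, and overlapping when $t<R$. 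In particular the position of $B_\infty$ with respect to $I(f)$ and with respect to $I(f^{-1})$ is always the same.

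Next I would describe $f(B_\infty)$. As $f$ is a homography, $f(B_\infty)$ is again a horodisk, tangent to $\R\cup\{\infty\}$ at $f(\infty)=a/c$, which is precisely the centre of $I(f^{-1})$. I would read off its Euclidean diameter from the identity $\Imaginary f(z)=\Imaginary(z)/|cz+d|^2$: on the horocycle $\Imaginary(z)=t$ the quantity $t/\bigl((cx+d)^2+c^2t^2\bigr)$ is maximal at $z=-d/c+it$, so $f(B_\infty)$ is the disk tangent to $\R$ at $a/c$ whose highest point, namely $a/c+iR^2/t$, sits at height $R^2/t$.

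Now I would simply compare the two pictures. On one side, $B_\infty$ and $f(B_\infty)$ are disjoint (resp. tangent) iff the top of $f(B_\infty)$ does not rise above the floor $\Imaginary=t$ of $B_\infty$, that is iff $R^2/t<t$ (resp. $R^2/t=t$), i.e. iff $t>R$ (resp. $t=R$). On the other side, $B_\infty$ is disjoint from (resp. tangent to) $I(f^{-1})$ iff $t>R$ (resp. $t=R$), since the disk bounded by $I(f^{-1})$ reaches up to height $R$. The two trichotomies coincide, which is exactly the claim.

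Finally I would record the coordinate-free reading announced just before the statement: when $t>R$ the congruence observed above places $B_\infty$ in the exterior of $I(f)$, so $f(B_\infty)$ lands in the interior of $I(f^{-1})$ while $B_\infty$ sits in its exterior, forcing disjointness. The one delicate point — and really the only obstacle — is the equality case: one must check that the two boundary contacts occur at the \emph{same} point, namely the top $a/c+iR$ of $I(f^{-1})$, equivalently that $f$ carries the top $-d/c+iR$ of $I(f)$ to it. This is immediate from the height computation above (at $t=R$ the apex of $f(B_\infty)$ is $a/c+iR$), and with it in hand the lemma is nothing more than the congruence of the isometric circles combined with the exterior-to-interior property already established.
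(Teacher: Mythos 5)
Votre démonstration est correcte, mais elle suit une route différente de celle du texte. Le papier expédie le lemme en une phrase, en invoquant la propriété classique des cercles isométriques~: $f$ envoie l'extérieur de $I(f)$ sur l'intérieur de $I(f^{-1})$, et les deux cercles, de même rayon et centrés sur $\R$, occupent la même position par rapport à tout horodisque centré à l'infini. Vous, au contraire, calculez tout explicitement~: rayon commun $R=1/|c|$, centres $-d/c$ et $a/c$, puis la hauteur euclidienne $R^2/t$ du sommet de $f(B_\infty)$ via $\Imaginary f(z)=\Imaginary(z)/|cz+d|^2$, et vous ramenez les deux trichotomies à la comparaison de $t$ et $R$. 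Ce que votre approche apporte~: la position exacte du point de tangence ($a/c+iR$, sommet commun de $I(f^{-1})$, de $f(B_\infty)$ et du bord de $B_\infty$ quand $t=R$), donc un traitement complètement quantitatif du cas d'égalité, et au passage une re-démonstration de la propriété extérieur-vers-intérieur restreinte aux horodisques~; ce que l'approche du papier apporte~: une preuve sans coordonnées, immédiate dès que l'on connaît le fait standard sur les cercles isométriques. Un seul point de rédaction à corriger~: dans votre dernier paragraphe vous invoquez la propriété extérieur-vers-intérieur comme «~déjà établie~», alors que vous ne l'avez pas prouvée en général — mais l'instance dont vous avez besoin (si $t>R$, l'horodisque $f(B_\infty)$, de diamètre $R^2/t<R$ et tangent à $\R$ au centre $a/c$ de $I(f^{-1})$, est contenu dans le disque ouvert bordé par $I(f^{-1})$) découle bien de votre calcul de hauteur, il suffirait de le dire ainsi.
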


Considérons un groupe fuchsien non élémentaire $\Gamma\leqslant\PSL(2,\R)$ possédant des éléments paraboliques. Nous appelons $X$ la surface hyperbolique quotient $\Hyp/\G$. Quitte à conjuguer $\G$ dans $\PSL(2,\R)$, nous supposons le stabilisateur de l'infini de la forme $\G_\infty=\langle z\mapsto z+\omega \rangle$ avec $\omega>0$. Quitte à conjuguer une nouvelle fois, nous supposons que $B_\infty=\{z\in\Hyp~;~\Imaginary(z)>1\}$ est le plus grand horodisque centré en l'infini tel que $\g(B_\infty)\cap B_\infty=\emptyset$ pour tout $\g\in\G\setminus\G_\infty$. Par maximalité, l'un des horodisques $\g(B_\infty)$ est tangent à $B_\infty$.

\begin{proposition}[Seppälä-Sorvali]\label{pro:sorvali}
Nous avons $\omega\geq 4$, ainsi l'horodisque $B_\infty$ se projette dans $X$ sur une région cuspidale d'aire au moins $4$. De plus, il y a égalité si et seulement si $X$ est isométrique au pantalon à trois pointes.
\end{proposition}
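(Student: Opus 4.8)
La preuve repose sur la m\'ethode des cercles isom\'etriques. Remarquons d'abord que la r\'egion cuspidale $B_\infty/\G_\infty$ est isom\'etrique au quotient $\{\Imaginary(z)>1\}/\langle z\mapsto z+\omega\rangle$, dont l'aire vaut $\int_1^{\infty}\!\int_0^{\omega} y^{-2}\,\diff x\,\diff y=\omega$; il suffit donc d'\'etablir $\omega\geq 4$. Pour tout $\g\in\G\setminus\G_\infty$, le lemme~\ref{lem:tangence} appliqu\'e \`a $\g$ montre que $B_\infty$ et $\g(B_\infty)$ sont disjoints si et seulement si $B_\infty$ est disjoint du cercle isom\'etrique $I(\g^{-1})$. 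Comme ce dernier est un demi-cercle centr\'e sur l'axe r\'eel, de rayon $R=1/|c_\g|$, il atteint la hauteur $R$; l'invariance pr\'ecise de $B_\infty$ force donc $R\leq 1$, autrement dit tout cercle isom\'etrique a un rayon au plus $1$. La maximalit\'e de $B_\infty$ assure que cette borne est atteinte : il existe $f\in\G\setminus\G_\infty$ dont le cercle isom\'etrique est de rayon exactement $1$. Quitte \`a normaliser $c_f=1$ et \`a \'ecrire $f=\left(\begin{smallmatrix}a&b\\1&d\end{smallmatrix}\right)$, les cercles $I(f)$ et $I(f^{-1})=f(I(f))$ sont alors tous deux de rayon $1$, centr\'es sur l'axe r\'eel aux points $\beta=-d$ et $\alpha=a$, avec $\alpha-\beta=\mrm{tr}(f)$ et $|\mrm{tr}(f)|\geq 2$ puisque $\G$ est sans torsion.

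Le c\oe ur de la preuve consiste \`a montrer que tous les cercles isom\'etriques de rayon $1$ -- c'est-\`a-dire $I(f)$, $I(f^{-1})$ et leurs translat\'es sous $\G_\infty$, de centres $\{\alpha+n\omega\}\cup\{\beta+n\omega\}$ -- sont deux \`a deux disjoints, ce qui revient \`a dire que leurs centres sont \`a distance au moins $2$ les uns des autres. C'est l\`a le point d\'elicat. La formule $\mrm{rayon}\,I(gh)=\frac{\mrm{rayon}\,I(g)\cdot\mrm{rayon}\,I(h)}{|\,\mrm{centre}\,I(h^{-1})-\mrm{centre}\,I(g)\,|}$ montre d\'ej\`a que deux tels centres sont \`a distance au moins $1$ (sinon un produit aurait un rayon $>1$, contredisant l'\'etape pr\'ec\'edente). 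Pour promouvoir cette minoration de $1$ \`a $2$, je m'attends \`a devoir it\'erer cette estimation, \`a la mani\`ere d'un argument de type Shimizu--J\o rgensen : si deux cercles maximaux \'etaient \`a distance dans $[1,2[$, une puissance convenable de l'\'el\'ement quotient finirait par acqu\'erir un rayon $>1$. Le cas extr\'emal de cette in\'egalit\'e n'est autre que l'in\'egalit\'e classique $|st|\geq 4$ pour deux paraboliques engendrant un groupe discret.

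Admettons cette disjonction. Comme $|\alpha-\beta|=|\mrm{tr}(f)|\geq 2>0$, les points $\alpha$ et $\beta$ d\'efinissent (g\'en\'eriquement) deux centres distincts modulo $\G_\infty$ -- la co\"incidence $\mrm{tr}(f)\in\omega\Z$ impose $\omega\leq|\mrm{tr}(f)|$ et se traite \`a part. Sur le cercle $\R/\omega\Z$, ces deux centres d\'ecoupent deux arcs compl\'ementaires, chacun de longueur au moins $2$ par disjonction, et de somme $\omega$; d'o\`u $\omega\geq 4$, puis $\area(B_\infty/\G_\infty)=\omega\geq 4$. Dans le cas d'\'egalit\'e $\omega=4$, les deux arcs valent exactement $2$ : les cercles maximaux cons\'ecutifs sont tangents et $\mrm{tr}(f)=2$, donc $f$ est parabolique. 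Le sous-groupe $\langle T,f\rangle$ (o\`u $T:z\mapsto z+\omega$) est alors engendr\'e par deux paraboliques r\'ealisant le cas limite $|st|=4$, c'est donc le groupe du pantalon \`a trois pointes; la maximalit\'e de $B_\infty$ force enfin $\G=\langle T,f\rangle$, et $X$ est isom\'etrique au pantalon \`a trois pointes. R\'eciproquement, celui-ci r\'ealise $\omega=4$.
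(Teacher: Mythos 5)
Votre strat\'egie (cercles isom\'etriques) est celle du texte, et vos premi\`eres \'etapes sont correctes : aire de la r\'egion cuspidale $=\omega$, rayon au plus $1$ de tout cercle isom\'etrique via le lemme~\ref{lem:tangence}, existence d'un \'el\'ement $f$ dont le cercle est de rayon exactement $1$. Mais l'\'etape que vous qualifiez vous-m\^eme de d\'elicate et que vous finissez par admettre (\emph{Admettons cette disjonction}) est pr\'ecis\'ement le c\oe ur de la d\'emonstration : en l'\'etat, $\omega\geq 4$ n'est pas prouv\'e. De plus, l'\'enonc\'e g\'en\'eral vis\'e par votre it\'eration \emph{\`a la} Shimizu--J\o rgensen --- tous les cercles isom\'etriques de rayon $1$ seraient deux \`a deux disjoints --- est faux : dans le sous-groupe d\'eriv\'e de $\PSL(2,\Z)$ (quotient un tore perc\'e, $\omega=6$, $B_\infty=\{z~;~\Imaginary(z)>1\}$), les commutateurs $f_1:z\mapsto(2z+1)/(z+1)$ et $f_2:z\mapsto(z+1)/(z+2)$ ont des cercles isom\'etriques de rayon $1$ centr\'es en $2$ et $1$, qui se chevauchent (les horodisques images sont seulement tangents entre eux). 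La minoration par $2$ des distances entre centres n'est donc pas vraie pour une paire arbitraire de cercles maximaux, et l'argument esquiss\'e ne peut pas la fournir.

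En revanche, la seule instance dont vous avez besoin --- entre le centre de $I(f^{-1})$ et les translat\'es du centre de $I(f)$ --- est vraie et s'obtient en une ligne, sans it\'eration : avec $c_f=1$, ces centres valent $a$ et $-d+n\omega$, donc leur distance est $|a+d-n\omega|=|\mrm{tr}(T^{-n}f)|$ o\`u $T:z\mapsto z+\omega$ ; or $T^{-n}f$ appartient \`a $\G$, n'est pas l'identit\'e (son c\oe fficient inf\'erieur gauche vaut $1$) et n'est pas elliptique ($\G$ est discret sans torsion), d'o\`u $|\mrm{tr}(T^{-n}f)|\geq 2$. Votre argument des deux arcs de $\R/\omega\Z$ donne alors $\omega\geq 4$. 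C'est exactement la preuve du texte, formul\'ee autrement : on y choisit le translat\'e minimisant la distance $|b|$ entre les centres de $I(\g)$ et $I(\g^{-1})$, la non-ellipticit\'e donne $|b|\geq 2$ et la minimalit\'e donne $|b|\leq\omega/2$. Un mot enfin sur le cas d'\'egalit\'e : \emph{la maximalit\'e de $B_\infty$ force $\G=\langle T,f\rangle$} n'est pas une justification ; il faut plut\^ot observer que $\langle T,f\rangle$ est conjugu\'e au sous-groupe de congruence modulo $2$, de covolume $2\pi$, et qu'un groupe fuchsien sans torsion le contenant strictement aurait un quotient d'aire strictement inf\'erieure \`a $2\pi$, ce qui est exclu par Gauss--Bonnet.
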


\begin{remark}\label{rem:disjoints}
\begin{enumerate}
\item La preuve ci-dessous est une version très légèrement modifiée de celle de \cite{sorvali}, plus dans l'esprit de \cite{adams}. Nous déterminons le cas d'égalité, contrairement à \cite{sorvali}.
\item On montre plus facilement (sans recourir aux cercles isométriques) que toute pointe de $X$ admet pour voisinage une région cuspidale d'aire $2$, et que ces voisinages sont disjoints. 
\end{enumerate}
\end{remark}

\begin{proof}
Soit $\g$ un élément de $\G\setminus\G_\infty$ envoyant $B_\infty$ sur un horodisque qui lui est tangent, nous supposons cet horodisque centré en $0$ et l'appelons $B_0$. L'isométrie $\g^{-1}$ envoie les horodisques $B_0$ et $B_\infty$ sur deux horodisques tangents, plus précisément sur $B_\infty=\g^{-1}(B_0)$ et $B_b=\g^{-1}(B_\infty)$. L'indice $b\in \R$ désigne le centre de l'horodisque. Quitte à conjuguer $\g$ par une puissance de $z\mapsto z+\omega$, nous supposons $b$ de module minimal parmi les points de $\G_\infty\cdot b$.\par
 En appliquant le lemme~\ref{lem:tangence} aux homographies $\g^{\pm1}$, il apparaît que les cercles $I(\g^{\pm 1})$ sont de rayon $1$ et centrés en $0$ et $b$. Ces cercles étant disjoints, nous avons $|b|\geq 2$. Par minimalité de $|b|$, nous trouvons $|\omega|\geq 2|b|\geq 4$.\par
  Pour identifier le cas d'égalité, nous allons légèrement modifier la situation. Nous posons $B_\infty=\{z\in\C~;~\Imaginary(z)>1/2\}$ et $\G_\infty=\langle z\mapsto z+2 \rangle$. Les cercles $I(\g^{\pm 1})$ sont alors de rayon $1/2$ et tangents. Disons que $I(\g^{\pm 1})$ est le cercle de rayon $1/2$ centré en $\pm 1/2$. Dans ce cas, nous avons explicitement $\g:z\mapsto -z/(2z-1)$. On conclut en remarquant que $z\mapsto z+2$ et $\g$ forment un base du sous-groupe de congruence modulo $2$ de $\PSL(2,\Z)$, dont le quotient est une sphère à trois pointes.
\end{proof}

Le fait suivant est bien connu~:
\begin{corollary}\label{cor:simple}
Aucune géodésique fermée simple de $X$ n'entre dans la région cuspidale image de l'horodisque $\{z\in\Hyp~; \Imaginary(z)>\sqrt{1+\omega^2/4}~\}$. Ainsi, aucune géodésique fermée simple d'une surface hyperbolique orientable n'entre dans une région cuspidale d'aire $2$.
\end{corollary}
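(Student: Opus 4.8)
Le plan est de majorer la hauteur maximale qu'atteint un relev\'e d'une g\'eod\'esique ferm\'ee simple dans le demi-plan sup\'erieur. Je place la pointe consid\'er\'ee \`a l'infini, de stabilisateur $\G_\infty=\langle z\mapsto z+\omega\rangle$. Une g\'eod\'esique ferm\'ee simple $\g$ est la projection de l'axe d'un \'el\'ement hyperbolique $g\in\G$, et je commence par observer qu'aucun \'el\'ement hyperbolique ne fixe l'infini: sinon un hyperbolique et un parabolique partageraient un point fixe, contredisant la discr\'etude. Les axes de $g$ et de tous ses conjugu\'es sont donc des demi-cercles de rayon fini orthogonaux \`a $\R$, et la hauteur maximale de $\g$, atteinte car $\g$ est compacte, est le plus grand de ces rayons.

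L'\'etape cl\'e consiste \`a exploiter la translation parabolique $p:z\mapsto z+\omega$. Si $\tilde\g$ d\'esigne le relev\'e de rayon maximal $\r$, centr\'e en $x_0\in\R$, alors $pgp^{-1}$ est hyperbolique d'axe $p(\tilde\g)$, demi-cercle de m\^eme rayon $\r$ centr\'e en $x_0+\omega$. Je v\'erifie que c'est un relev\'e de $\g$ distinct de $\tilde\g$ --- une translation horizontale non triviale ne pr\'eserve aucun demi-cercle de rayon fini --- puis j'invoque la simplicit\'e de $\g$ pour conclure que $\tilde\g$ et $p(\tilde\g)$ sont disjoints. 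Deux demi-cercles de m\^eme rayon $\r$ dont les centres sont distants de $\omega$ \'etant disjoints si et seulement si $\omega\geq 2\r$, j'obtiens $\r\leq\omega/2$. C'est le point \`a surveiller: il faut s'assurer que le relev\'e de hauteur maximale et son translat\'e par $p$ sont bien deux relev\'es distincts, ce qui ne rel\`eve que de la discr\'etude du groupe.

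Il ne reste qu'\`a traduire cette borne. Puisque $\g$ demeure sous la hauteur $\omega/2$, elle \'evite la r\'egion cuspidale image de $\{z\in\Hyp~;~\Imaginary(z)>\omega/2\}$, dont l'aire vaut $\omega/(\omega/2)=2$: aucune g\'eod\'esique ferm\'ee simple n'entre dans une r\'egion cuspidale d'aire $2$, ce qui est le second \'enonc\'e. Comme $\omega/2\leq\sqrt{1+\omega^2/4}$, la r\'egion $\{\Imaginary(z)>\sqrt{1+\omega^2/4}\}$ est incluse dans la pr\'ec\'edente, d'o\`u le premier \'enonc\'e. J'indiquerais enfin que le m\^eme calcul se reformule avec les cercles isom\'etriques de l'appendice: l'entr\'ee inf\'erieure gauche $c$ de la matrice de $g$, donc le rayon $R=1/|c|$ de $I(g)$, est invariante par conjugaison par $p$, et la disjonction des axes translat\'es redonne exactement $\r\leq\omega/2$.
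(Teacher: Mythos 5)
Votre argument est correct et reprend essentiellement la preuve du texte, qui tient en une phrase : tout relev\'e d'une g\'eod\'esique ferm\'ee simple est disjoint de son image par $z\mapsto z+\omega$, d'o\`u un rayon au plus $\omega/2$ pour chaque relev\'e, donc l'exclusion de la r\'egion cuspidale d'aire $2$ et, a fortiori, de celle au-dessus de la hauteur $\sqrt{1+\omega^2/4}$. Votre r\'edaction ne fait qu'expliciter les points laiss\'es au lecteur (aucun \'el\'ement hyperbolique ne fixe l'infini, le translat\'e est un relev\'e distinct, crit\`ere de disjonction de deux demi-cercles de m\^eme rayon).
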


\begin{proof}
Tout relevé d'une géodésique fermée simple est disjoint de son image par $z\mapsto z+\omega$.
\end{proof}

\subsubsection{Un lemme} Nous nous plaçons dans la situation du paragraphe précédent. Soit $z_0\in\Hyp$ un point quelconque de partie imaginaire $\Imaginary(z_0)\geq\sqrt{3}$. 

\begin{lemma}\label{lem:minoration-distance}
Si $\g$ est un élément hyperbolique de $\G$ dont l'axe pénètre $B_\infty$, alors
$$\sinh\left(\frac{d_\Hyp(z_0,\g\cdot z_0)}{2}\right)> \frac{2}{\sqrt{3}}.$$
\end{lemma}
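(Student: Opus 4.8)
Le plan est de calculer explicitement le d\'eplacement $d_\Hyp(z_0,\g z_0)$ gr\^ace \`a la formule de distance du demi-plan, puis de traduire les deux hypoth\`eses en contraintes sur l'axe et sur les cercles isom\'etriques de $\g$.

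\textbf{\'Etape 1 (formule de d\'eplacement).} J'\'ecris $\g(z)=(az+b)/(cz+d)$ avec $ad-bc=1$, et je note $z_+,z_-$ les points fixes r\'eels de $\g$ (les extr\'emit\'es de son axe). En combinant l'identit\'e $\cosh d_\Hyp(z,w)-1=|z-w|^2/(2\Imaginary(z)\Imaginary(w))$, la relation $\Imaginary(\g z)=\Imaginary(z)/|cz+d|^2$ et la factorisation $(\g z-z)(cz+d)=-c(z-z_+)(z-z_-)$, j'obtiens
$$\sinh^2\!\left(\frac{d_\Hyp(z_0,\g z_0)}{2}\right)=\frac{|z_0-z_+|^2\,|z_0-z_-|^2}{4R^2\,\Imaginary(z_0)^2},$$
o\`u $R=1/|c|$ est le rayon commun des cercles isom\'etriques $I(\g)$ et $I(\g^{-1})$.

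\textbf{\'Etape 2 (traduction des hypoth\`eses).} L'axe de $\g$ est le demi-cercle euclidien de diam\`etre $[z_-,z_+]$~; il p\'en\`etre $B_\infty=\{\Imaginary>1\}$ si et seulement si son rayon euclidien $r=|z_+-z_-|/2$ v\'erifie $r>1$. Par ailleurs, comme $B_\infty$ est maximal parmi les horodisques centr\'es \`a l'infini disjoints de leurs images sous $\G\setminus\G_\infty$, le lemme~\ref{lem:tangence} appliqu\'e \`a $\g$ force le cercle $I(\g^{-1})$, centr\'e sur l'axe r\'eel, \`a \^etre disjoint de $B_\infty$~; il en r\'esulte $R\leq 1$.

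\textbf{\'Etape 3 (minimisation).} Il reste \`a minorer le membre de droite sous les contraintes $r>1$, $R\leq1$ et $\Imaginary(z_0)\geq\sqrt3$. En posant $z_0=x_0+iy_0$ et $u=x_0-(z_++z_-)/2$, un calcul direct donne $|z_0-z_+|^2|z_0-z_-|^2=(u^2+r^2+y_0^2)^2-4u^2r^2$. Le facteur $1/R^2$ \'etant minimal pour $R=1$, je minimise d'abord en $u$~: la d\'eriv\'ee s'annule en $u=0$ et (si $r\geq y_0$) en $u^2=r^2-y_0^2$, ce qui donne $4r^2y_0^2$ lorsque $r\geq y_0$ et $(r^2+y_0^2)^2$ lorsque $r<y_0$. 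On minimise ensuite en $y_0\geq\sqrt3$. L'infimum est atteint \`a la limite $u=0$, $y_0=\sqrt3$, $r\to1^+$ (avec $R=1$), o\`u le quotient tend vers $(1+3)^2/(4\cdot3)=4/3=(2/\sqrt3)^2$. L'in\'egalit\'e restant stricte puisque $r>1$, on conclut $\sinh(d_\Hyp(z_0,\g z_0)/2)>2/\sqrt3$.

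Le point d\'elicat est cette derni\`ere optimisation~: c'est elle qui fait appara\^itre le seuil $\sqrt3$ et la constante $2/\sqrt3$, via la configuration extr\'emale (point $z_0$ \`a la verticale du milieu de l'axe, \`a hauteur $\sqrt3$, axe tangent int\'erieurement \`a $B_\infty$ et cercle isom\'etrique de rayon~$1$), qui explique aussi pourquoi la borne n'est pas atteinte.
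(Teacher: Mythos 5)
Votre preuve est correcte, et elle part des m\^emes ingr\'edients que celle du texte~: la p\'en\'etration de l'axe dans $B_\infty$ se traduit par $r>1$, et la maximalit\'e de $B_\infty$ jointe au lemme~\ref{lem:tangence} donne $R\leq 1$ pour le rayon des cercles isom\'etriques. La diff\'erence est dans la fa\c{c}on de borner le d\'eplacement. Le texte utilise la formule $\sinh(d_\Hyp(z_0,\g z_0)/2)=\sinh(d/2)\cosh(h)$ avec les relations $\sinh(d/2)=\tan\theta$ et $r=R\tan\theta$, puis minore la distance $h$ de $z_0$ \`a l'axe, ce qui tient en deux lignes~; vous \'etablissez au contraire, via la factorisation par les points fixes, la formule exacte $\sinh^2(d_\Hyp(z_0,\g z_0)/2)=|z_0-z_+|^2\,|z_0-z_-|^2/(4R^2\Imaginary(z_0)^2)$, puis vous optimisez explicitement en $u$, $y_0$, $r$, $R$. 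Votre calcul est plus long, mais il traite uniform\'ement toutes les positions de $z_0$~: en particulier le cas o\`u $z_0$ est proche de l'axe (donc $r\geq\sqrt{3}$), pour lequel la minoration interm\'ediaire $\cosh(h)\geq(3+r^2)/(2\sqrt{3}\,r)$ du texte ne d\'ecoule de $h\geq\ln(\sqrt{3}/r)$ que lorsque $r\leq\sqrt{3}$ --- le texte s'en tire parce que la conclusion est alors imm\'ediate ($\sinh(d/2)=r/R\geq\sqrt{3}$), mais votre formule rend ce point transparent, et elle identifie en prime la configuration extr\'emale qui explique la constante $2/\sqrt{3}$. Une seule retouche mineure~: dans la branche $r\geq y_0$, dites explicitement que le minimum en $u$ donne la valeur $4r^2y_0^2/(4y_0^2)=r^2\geq 3>4/3$, v\'erification d'une ligne que votre r\'edaction laisse implicite.
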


\begin{proof}
Nous notons $d$ la distance de translation de $\g$ le long de son axe, et $h$ la distance hyperbolique entre $z_0$ et l'axe de $\g$. La distance de déplacement de $z_0$ par $\g$ est donnée par la formule~: 
 \begin{eqnarray}\label{eq:deplacement}
 \sinh(d_\Hyp(z_0,\g(z_0))/2) & = & \sinh(d/2)\cosh(h).
 \end{eqnarray}\par

Nous supposons que l'axe de $\g$ est le cercle de rayon $r>0$ centré en l'origine. Dans ce cas, nous avons $h\geq \ln(\sqrt{3}/r)$ soit $\cosh(h)\geq(3+r^2)/2\sqrt{3}r$. En utilisant les égalités de la fin du \textsection~\ref{sec:cercles}, nous trouvons
$$\sinh\left(\frac{d_\Hyp(z_0,\g\cdot z_0)}{2}\right) \geq \frac{(3+r^2)}{2\sqrt{3}} \frac{\tan\theta}{r} = \frac{(3+r^2)}{2\sqrt{3}R}.$$
Il ne reste plus qu'à injecter les inégalités $R\leq 1$ ($B_\infty$ se projette sur une région cuspidale) et $r> 1$ (la géodésique $\g$ de $X$ pénètre cette région cuspidale).
\end{proof}

\end{document}